\newcommand{\1}{\mathbf{1}}
\newcommand{\R}{\mathbb{R}}
\newcommand{\Z}{\mathbb{Z}}
\newcommand{\VV}{\mathbf{V}}
\newcommand{\EE}{\mathbf{E}}
\newcommand{\osc}{\mathrm{osc}}
\newcommand{\VVi}{\mathbf{V}_\mathbf{i}}
\newcommand{\VVb}{\mathbf{V}_\mathbf{b}}
\newcommand{\e}{\varepsilon}
\newcommand{\ver}{\mathrm{\mathbf{v}}}
\theoremstyle{plain}
\newtheorem{defi}{Definition}[section]
\newtheorem{prop}[defi]{Proposition}
\newtheorem{teo}[defi]{Theorem}
\newtheorem{lema}[defi]{Lemma}
\theoremstyle{definition}
\newtheorem{rema}[defi]{Remark}
\theoremstyle{remark}
\numberwithin{equation}{section}
\def\ue{u^\e}
\def\varphib{\pmb{\varphi}}
\def\psib{\pmb{\psi}}
\def\chib{\pmb{\chi}}
\begin{document}

\title[]{Degenerate elliptic PDEs on a network with Kirchhoff conditions}

\author[]{Guy Barles}
\address{
Guy Barles: Institut Denis Poisson (UMR CNRS 7013)
Université de Tours, Université d’Orléans, CNRS.
Parc de Grandmont. 37200 Tours, France.
\\
{\tt guy.barles@idpoisson.fr}
}

\author[]{Olivier Ley}
\address{
Olivier Ley: Univ Rennes, INSA Rennes, CNRS, IRMAR - UMR 6625, F-35000 Rennes, France.
{\tt olivier.ley@insa-rennes.fr}
}

\author[]{Erwin Topp}
\address{
Erwin Topp:
Instituto de Matem\'aticas, Universidade Federal do Rio de Janeiro, Rio de Janeiro - RJ, 21941-909, Brazil. {\tt etopp@im.ufrj.br}
}

\date{\today}

\begin{abstract}
In this article, we are interested in semilinear, possibly degenerate elliptic equations posed on a general network, with nonlinear Kirchhoff-type conditions for its interior vertices and Dirichlet boundary conditions for the boundary ones. The novelty here is the generality of the equations posed on each edge that is incident to a particular vertex, ranging from first-order equations to uniformly elliptic ones.
Our main result is a strong comparison principle, i.e., a comparison result between discontinuous viscosity sub and supersolutions of such problems, from which we conclude  the existence and uniqueness of a continuous viscosity by Perron's method. Further extensions are also discussed.

\end{abstract}

\keywords{Degenerate elliptic equations, Hamilton-Jacobi Equations,  Networks, Viscosity solutions, Kirchhoff conditions, Existence, Comparison, Regularity, Junction viscosity solutions, Flux-limited solutions}

\subjclass[2010]{35J70, 49L25, 35B51, 35R02, 34B45}

\maketitle

\section{Introduction.}

A general network $\Gamma$ in $\R^d$ is made of a finite number of vertices $\ver\in\VV$ connected
with a finite number of edges $E\in \EE$. Each edge has the differential structure of a line segment and, after a suitable
parametrization, we can put a differential equation on each of them. It is desirable  that these equations are
edge-by-edge unrelated, in principle, if we look for potential applications, but also by natural mathematical questions. By imposing conditions on the vertices, we couple these equations into a system that may be regarded as a problem in the full network.
In this paper, we consider Kirchhoff-type conditions on some \textsl{interior vertices}
$\ver \in \VVi$
and suitable boundary conditions on the
remaining \textsl{boundary vertices} $\ver \in \VVb$,


To simplify the exposure in the introduction, we will only describe the simplified
case of a {\em junction}, which contains the main relevant difficulties,
and refer the reader to Section~\ref{sec:network} for the general case.
A junction is
a star-shaped network $\Gamma$ embedded in $\R^d$,
$d \geq 2$, where $O = (0,...,0) \in \R^d$ is the unique interior vertex, that is $\VVi =\{O\}$,
and we refer to it as the {\it junction} point.
We consider a family $\VVb = \{\ver_1, ..., \ver_N \}$ of $N$ boundary vertices, and $N$ (open) edges $\{ E_i \}_{1\leq i\leq N}$ given
by
$$
E_i=\{t\ver_i, t \in (0,1)\}.
$$

Of course, we assume that, for any $i,j$, $\ver_i\neq O$ and $\ver_i$, $\ver_j$ are not collinear; as a consequence the $E_i$'s are non-empty and, if $i\neq j$, $E_i\cap E_j =\emptyset$. With these notations, we have
\begin{equation*}
\Gamma :=  \bigcup_{i=1}^N \bar E_i,
\end{equation*}
where $\bar E_i$ denotes the closure of the set $E_i$ in $\R^d$.

For $x\in E_i$, we set $x_i:=|x|$ where $|\cdot |$ denotes the usual Euclidean norm in $\R^d$. We have in that way a natural parametrization by arc length of the (open) segment $E_i$ and its closure $\bar E_i$.
For a function $u:\Gamma\to\R$,
we define
$u_i : [0, \ell_i]\to \R$, where $\ell_i=|\ver_i|$, by
$$ u_i (x_i):=u(x)\quad \hbox{if $x\in \bar E_i$,}$$
and we use the notations $u_x(x)=u_{x_i}(x)=u'_i(x_i)$ and $u_{x x}(x)=u_{x_i x_i}(x)=u''_i(x_i)$ for  $x\in E_i$. 

With these notations, we can define in a proper way
our problem
\begin{eqnarray}\label{pde-junct}
&& \left\{\begin{array}{ll}
\lambda u (x) - a_i(x) u_{x x} (x) + H_i(x,u_{x})=0, & x\in E_i, \ 1\leq i\leq N, \\[2mm]
F(u(O), u_{x_1}(O), \cdots ,  u_{x_N}(O))=0, &\\[2mm]
u(\ver_i)= h_i, & 1\leq i\leq N,
\end{array}\right.
\end{eqnarray}
where $\lambda >0$ is a constant, $a_i: \bar E_i \to [0,+\infty)$ is the (possibly degenerate) diffusion,
$H_i: \bar E_i \times \R \to \R$ is the Hamiltonian,
and the Kirchhoff nonlinearity $F:\R^{N+1} \to \R$ is a continuous function satisfying suitable monotonicity
assumptions. As we mentioned above, such kind of problems are inherently discontinuous since no compatibility or continuity
assumptions are made for the $a_i$'s and $H_i$'s at the junction point. Moreover, though the junction condition $F$ is a device to get such a continuity in the ``transition" from edge to edge, it may be of quite different behavior if we compare with the equations near $O$ in each edge.

More precise notations and assumptions are given in Section~\ref{secdefjunction} but let us
immediately give
typical examples of Kirchhoff type nonlinearities. The classical Kirchhoff condition is
\begin{eqnarray}\label{class-kirch}
\sum_{1\leq i\leq N} -u_{x_i} (O) = B,
\end{eqnarray}
where $B\in \R$, but we can also have more general conditions like
$$\alpha_0 u + \sum_{1\leq i\leq N} -\alpha_i u_{x_i} (O) = B,$$
where $\alpha_0\geq 0$ and $\alpha_i >0$ for $1\leq i \leq N$, and even nonlinear ones
$$ \alpha_0 u + \sum_{1\leq i\leq N} \alpha_i (-u_{x_i} (O))^+ +\beta_i (-u_{x_i} (O))^-  = B,$$
with the same conditions as above and with $\beta_i >0$ for $1\leq i \leq N$. Here, for $s\in \R$, we use the notations
$s^+=\max(s,0)$ and $s^-=\min(s,0)$.

In general, problems like~\eqref{pde-junct} do not admit smooth solutions (in
particular if $a_i \equiv 0$) and we are going to use throughout this
paper some suitable notion of viscosity solutions.
In Section~\ref{sec:def-visco}, we give a precise definition for this notion of solution, and in particular at the junction point $O$:
this notion of solutions is the one which is used by Lions and Souganidis~\cite{ls16,ls17} and which is called ``junction viscosity solutions''
in the book of Barles and Chasseigne~\cite{bc24}. 

The study of optimal control problems on networks was the first motivation to look at such nonlinear equations and it naturally began
by the case of first-order Hamilton-Jacobi Equations. In this context, anyway,
the Kirchhoff condition~\eqref{class-kirch}
does not arise as the natural junction condition.
The analysis of deterministic optimal control problems posed on a simple junction and its relation with time-dependent Hamilton-Jacobi Equations can be found in the seminal works Achdou et al.~\cite{acct13} (see also~\cite{aot15}) and Imbert et al.~\cite{imz13}.
Then, Imbert and Monneau
in~\cite{im17} introduced the notion of \textsl{flux-limited solutions}, a particular notion at the junction point whose properties
makes it compatible with the optimal control perspective, and allows to weaken some of the standing assumptions
on the problem. In parallel, Lions and Souganidis~\cite{ls16, ls17}, see also Morfe~\cite{morfe20}, adressed the problem with a PDE point of view
and provide general comparison results for Hamilton-Jacobi Equations with Kirchhoff-type condition on the junction (as in~\eqref{pde-junct}). Surprinsingly enough, in the case of quasiconvex Hamiltonians the two notions of junction condition (flux-limited solutions and
junction viscosity solutions in the terminology of~\cite{bc24}) end up to be equivalent, after a translation of the \textsl{limiter parameters} involved in each formulation. The interested reader is referred to~\cite{im17, ls16, ls17} for the initial efforts to relate both theories, to~\cite{bc24} for a full description of the interplay among these notions of solutions and to~\cite{sc13, siconolfi22} for equations on general networks. 

Concerning nonlinear second-order equations, the structural assumptions at the junction point $O$ addressed in previous works are basically two: either the equation is uniformly elliptic, or the ellipticity vanishes at $O$. In the first case, the regularizing effect of the operator implies the solutions are $C^{2, \alpha}$ and satisfy the Kirchhoff condition in the classical sense, see~\cite{vonbelow88, cms13, cm16, ls16, ohavi21, adlt19, adlt20, blt24}. 
Now, when the ellipticity degenerates at $O$, the
junction condition does not involve second-order terms and both notion
of viscosity solutions for first-order equations can be implemented
for this problem: see Imbert and Nguyen~\cite{in17} for ``flux-limited
solutions'' and Lions and Souganidis~\cite{ls16} for ``junction viscosity
solutions''.


This work provides the first result of well-posedness
for general degenerate second-order nonlinear equations
on networks. We are able to deal with equations whose nature at the
junctions ranges from first-order Hamilton-Jacobi Equations to uniformly elliptic second-order equations
without assuming that either the equations are uniformly elliptic
in {\em each} edge, or the equations
are totally degenerate at the junction point, i.e., {\em all} the diffusions vanish there.

The difficulties of treating the possible degenerate case can be described
as follows.
On the one hand, 
the regularity of the solution to~\eqref{pde-junct} may fail at the junction point $O$
as soon as the diffusion $a_i$ degenerates along a single edge $E_i$
making useless the theory of elliptic equations used in the previous works.
On the other hand, if there is only one edge $E_i$ where the diffusion
ai does not vanish at the junction point $O$, then the viscosity
solutions' approach developed for first-order equations do not work; in
particular, there is no straightforward notion of flux limited solutions
and all the methods for junction viscosity solutions have to be revisited.


Our starting point was the work~\cite{blt24} in which we
realize that, if the diffusion is weak enough --in~\cite{blt24}, we
consider ``weak'' nonlocal diffusion--, then it is possible to
establish the well-posedness of viscosity solutions and to define
a notion of flux limited solution.
This was possible since the problem is, roughly speaking,
close to a first-order problem at the junction point.
We then notice that, for general problems involving second-order
equations, we can come back to a situation which is technically close to
the first-order one for proving comparison results; this is the
purpose of Propositions~\ref{reform-junct} and~\ref{junct-subdiff}
where it is proved that, either the
Kirchhoff condition holds, or only the edges where the diffusion
vanishes at $O$ play a role.
This and other important properties for sub and supersolutions at the junction point are collected in the key Lemma~\ref{inegGi}.
This opens the way to prove a strong comparison result,
i.e., comparison between any discontinuous viscosity subsolution $u$ and supersolution $v$,
see Theorem~\ref{teo:comp}.
The main novelties in the proof of the comparison result are:
on the one hand, we take profit of the first-order behavior of the system at the junction
point, which allows to adapt some ideas Lions and Souganidis~\cite{ls16, ls17} introduced
for first-order Hamilton-Jacobi equations. On the other hand,
suprisingly enough, when $u-v$ achieves a maximum at $O$, then
both the subsolution $u$ and the supersolution $v$ are differentiable at $O$ in nondegenerate
edges. Then, Hopf Lemma permits to erase the contribution of such edges at the
junction point. This allows us to conclude through the Kirchhoff condition, see the proof of Theorem~\ref{teo:comp} for details.

Once comparison is established, 
well-posedness and further properties easily follow by
more classical arguments, see Sections~\ref{sec:exis-junct},~\ref{sec:vanishing-visco} and~\ref{sec:reg}.
We stress once again that even in the case of a single interior vertex
on which we focus here, most of the main difficulties of the general network case are already present.
See Section~\ref{sec:network} for the general network case.

We believe that this work will be the starting point for further
developments. We mention open problems and perspectives in Section~\ref{easytricky}.
In particular, the parabolic version of~\eqref{pde-junct} is a natural extension that requires new ideas since in this case the junction condition must be analyzed on an ``interface" such as line segments $\{ O \} \times (0,T)$ with $T > 0$. 
More generally,
the viscosity solution's approach has shown to be successful to extend the analysis to
general networks with components which may be of higher dimensions, we refer to \cite{aot15}
for first-order equations and to \cite{in17} for second-order ones but that degenerate on the junction set.
We present some extensions of our results in this direction in Section~\ref{easytricky}.
Stratified media were introduced in Bressan and Hong~\cite{bh07}, and we refer to the book of Barles and Chasseigne~\cite{bc24}
for recent developments in this setting.

The paper is organized as follows. In Section~\ref{secdefjunction} we provide the details of the definition of the junction and the PDEs we consider, as 
well as the standing assumptions. In Section~\ref{sec:def-visco} we introduce the notion of junction viscosity solution and provide
some properties of the solutions at the vertices.
Section~\ref{sec-comp} is devoted to the strong comparison result.
Finally, in Section~\ref{vre}, we give the existence and regularity results, we consider the convergence of the vanishing viscosity method and we list some possible extensions. In Appendix~A we present the proofs of some auxiliary results that we use in the body of the paper.
\bigskip

\paragraph{\bf Acknowledgement.}
GB and OL are partially supported by the ANR (Agence Nationale de la Recherche) through the COSS project ANR-22-CE40-0010 and OL by the Centre Henri Lebesgue ANR-11-LABX-0020-01. ET was supported by CNPq Grant 306022/2023-0, and FAPERJ APQ1 210.573/2024. ET and OL were supported by CNPq Grant 408169/2023-0.

\section{Equation on a junction with Kirchhoff condition and Dirichlet boundary condition.}
\label{secdefjunction}

\subsection{Network and Junction}\label{sec:junct1}

As we have seen in the introduction, we have a natural parametrization of each $E_i$ by arc length, namely $x_i=|x|$ on $E_i$. If $\ell_i=|\ver_i|$ and 
$J_i = (0, \ell_i)$, we denote by $\gamma_i : \bar J_i=[0, \ell_i] \to \bar E_i$ the canonical bijection $\gamma_i(x_i) = x$
if $x \in \bar E_i$.
In particular, for each $x \in \Gamma \setminus \{ O \}$, there exists a unique $i$ such that $x \in E_i$, and in this case we write $x_i = 
\gamma_i^{-1}(x) \in J_i$. Throughout the article we will often make the abuse of notation by identifying $x\in E_i$ and $x_i\in J_i$.

Next we introduce the geodesic distance $\rho$ on $\Gamma$. Given $x,y\in \Gamma$, we define
\begin{equation}\label{def-geod}
\rho(x,y) = \left \{ \begin{array}{ll} |x - y| \quad & \text{if $x,y\in \bar E_i$}, \\
|x| + |y| \quad &  \text{if $x\in \bar E_i$, $y\in \bar E_j$ with $i\neq j$}. \end{array} \right . 
\end{equation}
Taking into account the way we parametrize the edges, we have $\rho(x,y) = |x_i - y_j|$ if $i = j$, and $\rho(x,y) = x_i + y_j$ if $i \neq j$. Thus, by abuse of notation, by writing $\rho(x_i, y_j)$ we mean  $\rho(\gamma_i(x_i), \gamma_j(y_j))$ for $x_i \in \bar J_i$ and $y_j \in \bar J_j$.
Notice that the geodesic distance $\rho$ and the distance induced on $\Gamma$ by the Eulidean norm are equivalent.

\subsection{Function spaces.}\label{sec:fct-space}

For a function $u : \Gamma \to \R$, we define $u_i = u \circ \gamma_i: \bar J_i \to \R$,
from which we have $u(x) = u_i(x_i)$ for $x\in \bar E_i$.
The function $u_i$ depends on the parametrization we chose but not its regularity.

We denote by $USC(\Gamma)$ (respectively  $LSC(\Gamma)$) the subset
of functions $u: \Gamma \to \R$ which are upper-semicontinuous (respectively
lower-semicontinuous) on $\Gamma$, that is, for all $i$, $u_i\in USC([0,\ell_i])$
(respectively  $u_i\in LSC([0,\ell_i])$).
The subset  $C(\Gamma)= USC(\Gamma)\cap LSC(\Gamma)$ is the set
of continuous functions on $\Gamma$.
It coincides with the usual notion of continuity
on $\Gamma$ induced by the geodesic distance.

For $u: \Gamma \to \R$, 
we say that $u$ is differentiable at $x \in E_i$ if
$u_i$ is differentiable at $x_i$, and in that case we denote
$$
u_{x}(x) = u_{x_i}(x)=u_i'(x_i).
$$
For $m\in\mathbb{N}$, the space of $m$-times continuously differentiable functions on $\Gamma$ is defined by
\[
C^{m}\left(\Gamma\right):=\left\{ u\in C\left(\Gamma\right):u_i\in C^{m}([0,\ell_i])\text{ for all } i\right\}.
\]
Notice that $u\in C^{m}\left(\Gamma\right)$ is assumed to be continuous on $\Gamma$, all the $u_i$ are
$C^{m}$-continuously differentiable inside the edges and all their derivatives of order less than $m$ can be extended by
continuity to $[0,\ell_i]$. More precisely, when $u\in C^1\left(\Gamma\right)$, we define 
\begin{eqnarray*}\label{prolong-derivative-O}
&&  u_{x_i}(O) := \lim_{x \to O, x \in E_i} u_x(x)
  = \lim_{x \to O, x \in E_i} \frac{u(x) - u(O)}{\rho(x, O)},\\
\label{prolong-derivative-bd}  
&&  u_{x_i}(\ver_i  ) := \lim_{x \to \ver_i, x \in E_i} u_x(x)
  = \lim_{x \to \ver_i , x \in E_i} - \frac{u(x) - u(\ver_i)}{\rho(x, \ver_i)}.
\end{eqnarray*}
The above derivatives depend on the parametrization through the
orientation we chose for the edges (notice the minus sign in the definition of the second one).

We do not try to define here the derivatives arising in the Kirchhoff condition more intrinsically but we point out that we may also use either
the notion of \textsl{inward or outward derivative} with respect to $E_i$ at $O$; these derivatives have the advantage to be independent of
the chosen orientation of the edge in a general network.

\subsection{Hamiltonians.}\label{sec:hamilt}

We assume the existence of a collection of continuous Hamiltonians $\{ H_i \}_{1 \leq i \leq N}$
such that, for all $i$ and for all  $x, y \in \bar E_i$ and $p,q \in \R$, $H_i$ satisfies 
\begin{eqnarray}\label{Ham}
&& \begin{array}{ll}
(i) & |H_i(x,p) - H_i(y, p)|\leq C_H(1+|p|)|x-y|,\\[2mm]
(ii) & |H_i(x,p) - H_i(x, q)|\leq C_H |p - q|. \\[2mm]
\end{array}
\end{eqnarray}

We will also require some coercivity assumption in some cases, namely  
\begin{align}\label{Hcoercive}
H_i(x, p) \geq C_H^{-1}|p| -C_H, \quad \mbox{for $x \in \bar E_i, p \in \R$}.
\end{align}

For simplicity, we have chosen to deal with Hamiltonians satisfying the most classical assumptions which are used in standard Optimal
Control problems, even if we do not impose any convexity assumption on the $H_i$. In particular,
Assumptions~\eqref{Ham}\,(i)-(ii) are related to the boundedness and Lipschitz continuity of the dynamic and the running cost, while
the coercivity assumption~\eqref{Ham} holds true in controllable situations (see, for instance, Bardi and Capuzzo Dolcetta~\cite{bcd97} or~\cite{barles13}). Nevertheless, several results presented here can be readily applied to Hamiltonians with superlinear growth in the gradient,
and/or less regularity on the state variable.

\subsection{Degenerate diffusion.}\label{sec:diff}
We consider a family  $\{ a_{i} \}_{1 \leq i \leq N}$ with
$a_i: \bar E_i\to [0,\infty)$ satisfying
\begin{eqnarray}\label{hyp-diff}
&& \begin{array}{c}
a_i=\sigma_i^2,
\quad
|\sigma_i(x)-\sigma_i(y)|\leq C_a|x-y|, \ x,y\in \bar E_i.
\end{array}
\end{eqnarray}

The above condition on the diffusion coefficient $a$ is a standard assumption on the analysis of degenerate elliptic PDEs in non-divergence form, see~\cite[Theorem II.2]{il90}.

\subsection{Nonlinear Kirchhoff condition.}\label{sec:kirch}

At the junction $O$, we will assume a nonlinear Kirchhoff condition
where
$F: \R\times \R^N\to \R$ is a continuous function satisfying
\begin{eqnarray}\label{hyp-kirch-gene}
&& \begin{array}{cl}
  (i) & \text{for all $r,s\in \R$, ${\bf p}=(p_1,\cdots , p_N), {\bf q}=(q_1,\cdots , q_N)\in \R^N$,}\\
      & \text{if $r\geq s$ and $p_i\leq q_i$ for all $i$, then $F(r,{\bf p})\geq F(s,{\bf q})$,}\\
      & \text{and, if there exists $j$ such that $p_j< q_j$, then $F(r,{\bf p})> F(s,{\bf q})$,}\\[2mm]
 (ii) & \text{for all $i$, $r\in \R$ and $\{p_j\}_{j\not= i}$, $\displaystyle \lim_{p_i\to -\infty} F(r,p_1, \cdots , p_i, \cdots, p_N)=+\infty$.}
\end{array}    
\end{eqnarray}

Typical examples of Kirchhoff conditions are given in the Introduction, see~\eqref{class-kirch} and subsequent examples.

\subsection{The steady assumptions.}

We introduce the set
\begin{eqnarray}
\label{ens-deg}
\mathcal{D}:=\{1 \leq i \leq N:\ a_i(O)=0\}.
\end{eqnarray}

The following {\bf steady assumption} for problem~\eqref{pde-junct}
is in force in all the paper:
\begin{eqnarray}\label{steady}
&& \hspace*{0.2cm}\begin{array}{l}
\text{(i) $\lambda >0$ and $h_i\in \R$, $1\leq i\leq N$, are given},\\[2mm]  
\text{(ii) For all $1\leq i\leq N$, $H_i$ satisfies~\eqref{Ham} and $a_i$ satisfies~\eqref{hyp-diff}},\\[2mm]
\text{(iii) If $i\in\mathcal{D}$, then $H_i$ satisfies~\eqref{Hcoercive}.}\\[2mm]
\text{(iv) The nonlinear Kirchhoff condition satisfies \eqref{hyp-kirch-gene}.}
\end{array}
\end{eqnarray}
Note that we can deal with PDEs which are degenerate in some edges and elliptic in
the others.
Assumption~\eqref{steady}\,(iii) means that, when the PDE is elliptic in an $\bar E_i$-neighborhood of the junction point,
then we can weaken the assumption on the corresponding Hamiltonian $H_i$, that is, coercivity is not
needed anymore.

\section{Definition and properties of viscosity solutions at the vertices}
\label{sec:def-visco}

Using the definitions introduced in the previous section, we are
now in position to
introduce the notion of solution for Problem~\eqref{pde-junct} at the junction.
The case of general networks can be adapted accordingly.

We point out that, in this section, we just assume that $H_i, a_i$
are continuous and $a_i\geq 0$, except in Proposition~\ref{PropDir}.

To simplify the notations, from now on, we denote
\begin{eqnarray}\label{def-G0}
  && G_{i}(u,p,X,x) := 
 \lambda u(x) - a_i(x)X + H_i(x, p),
\end{eqnarray}
$ \mathbf{u_x (O)}:=(u_{x_1}(O), \cdots ,  u_{x_N}(O)) \in \R^N$
and $\1 :=(1,\cdots,1) \in \R^N$.


\begin{defi}\label{defi-visco}
We say that $u \in USC(\Gamma)$  is a viscosity subsolution to problem~\eqref{pde-junct}
if, for each $x \in \Gamma$ and $\varphi \in C^2(\Gamma)$ such that $x$ is a local maximum point of $u - \varphi$, then we have
\begin{eqnarray*}
G_{i}\big(u(x), \varphi_{x_i}(x), \varphi_{x_i x_i}(x), x\big)\leq 0  &\mbox{if} & x \in E_i, 
\\
\min \Big{\{} \min_{1\leq i\leq N} G_{i}\big(u(O), \varphi_{x_i}(O), \varphi_{x_i x_i}(O) ,O\big) , F(u(O),\mathbf{\varphib_{x}(O)}) \Big{\}} \leq 0
& \mbox{if} & x=O,
\\
  \min \Big{\{} G_{i}\big(u(x), \varphi_{x_i}(x), \varphi_{x_i x_i}(x) , x\big)  , \ u(x)-h_i \Big{\}} \leq 0
& \mbox{if} & x=\ver_i.  
\end{eqnarray*}

We say that $v \in LSC(\Gamma)$  is a viscosity supersolution to problem~\eqref{pde-junct}
if, for each $x \in \Gamma$ and $\varphi \in C^2(\Gamma)$ such that $x$ is a local minimum point of
$v - \varphi$, then we have
\begin{eqnarray*}
G_{i}\big(v(x), \varphi_{x_i}(x), \varphi_{x_i x_i}(x),x\big)\geq 0  &\mbox{if} & x \in E_i, 
\\
\max \Big{\{} \max_{1\leq i\leq N} G_{i}\big(v(x), \varphi_{x_i}(O), \varphi_{x_i x_i}(x),O\big) ,
F(v(O),\mathbf{\varphib_{x}(O)} \Big{\}} \geq 0
& \mbox{if} & x=O,
\\
  \max \Big{\{} G_{i}\big(v(x), \varphi_{x_i}(x), \varphi_{x_i x_i}(x),x\big)  , \ v(x)-h_i \Big{\}} \geq 0
& \mbox{if} & x=\ver_i.  
\end{eqnarray*}

A viscosity solution  $u:\Gamma\to \R$ is a locally bounded function such that
its upper-semicontinuous envelope $u^*$ is a viscosity subsolution and
its lower-semicontinuous envelope $u_*$ is a viscosity supersolution.
\end{defi}

We continue with some reformulations of the definition of viscosity solutions at the junction. The following
results shows that, maybe surprisingly, the nondegenerate equations do not appear,
which leads to a first-order junction condition.

\begin{prop}\label{reform-junct}
Let $u \in USC(\Gamma)$  be a viscosity subsolution to problem~\eqref{pde-junct}
and let  $\varphi \in C^2(\Gamma)$ such that $O$ is a local maximum point of $u - \varphi$. Then
\begin{eqnarray}
\label{eq683}
&& \min \Big{\{} \min_{i \in \mathcal{D}}  \left\{\lambda u(O) + H_i(O, \varphi_{x_i}(O))\right\} , F(u(O),\mathbf{\varphib_{x}(O)} \Big{\}} \leq 0.
\end{eqnarray}

Let $v \in LSC(\Gamma)$  be a viscosity supersolution to problem~\eqref{pde-junct}
and let  $\varphi \in C^2(\Gamma)$ such that $O$ is a local minimum point of $v - \varphi$. Then
\begin{eqnarray}
\label{eq684}
&&\max \Big{\{} \max_{i \in \mathcal{D}}  \left\{\lambda v(O) + H_i(O, \varphi_{x_i}(O))\right\} , F(v(O),\mathbf{\varphib_{x}(O)} \Big{\}} \geq 0.
\end{eqnarray}
\end{prop}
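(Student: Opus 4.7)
The plan is to eliminate each nondegenerate index $i_0 \notin \mathcal{D}$ from the outer minimum in the subsolution inequality at $O$, one at a time, by a penalty argument. Since $G_i(u(O), p_i, X_i, O) = \lambda u(O) + H_i(O, p_i)$ independently of $X_i$ whenever $i \in \mathcal{D}$ (because $a_i(O) = 0$), removing all nondegenerate indices produces exactly \eqref{eq683}. The supersolution case is proved symmetrically with the sign of the penalty reversed.

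Fix $i_0 \notin \mathcal{D}$. First, I would replace $\varphi$ with $\hat\varphi := \varphi + \rho(\cdot, O)^4$, which shares the same first and second derivatives as $\varphi$ at $O$ and makes $O$ a strict local maximum of $u - \hat\varphi$ on a closed geodesic ball $\bar B_r^\Gamma(O)$, with $r>0$ to be chosen small. For each $M > 0$, construct $\tilde\varphi_M \in C^2(\Gamma)$ that coincides with $\hat\varphi$ on every edge other than $E_{i_0}$ and on $E_{i_0}$ has the form $\tilde\varphi_M(t) = \hat\varphi(t) - M \eta(t)$, where $\eta \in C^2([0, \ell_{i_0}])$ is a fixed bump satisfying $\eta(0) = \eta'(0) = 0$, $\eta''(0) = 1$, $\eta \ge 0$, and $\eta(t) = t^2/2$ on an interval $[0, r_0]$ with $r_0 < r$ (smoothly bounded beyond). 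Then $\tilde\varphi_{M, x_i}(O) = \varphi_{x_i}(O)$ for all $i$, and $\tilde\varphi_{M, x_{i_0} x_{i_0}}(O) = \varphi_{x_{i_0} x_{i_0}}(O) - M$, so that $G_{i_0}(u(O), \varphi_{x_{i_0}}(O), \tilde\varphi_{M, x_{i_0} x_{i_0}}(O), O) \to +\infty$ as $M \to \infty$ since $a_{i_0}(O) > 0$.

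Let $x_M$ be a point at which the USC function $u - \tilde\varphi_M$ attains its maximum on $\bar B_r^\Gamma(O)$. The strict maximum property of $u - \hat\varphi$ at $O$ confines $x_M$ to $\{O\} \cup (E_{i_0} \cap B_r^\Gamma(O))$, and for $M$ large a penalty/compactness argument (relying on the quartic correction $\rho^4$ in $\hat\varphi$ and a careful design of the cutoff) localizes $x_M$ in the quadratic region of $\eta$. If $x_M \ne O$, applying the subsolution definition at the interior point $x_M$ of $E_{i_0}$ gives
\[
\lambda u(x_M) - a_{i_0}(x_M)\bigl(\hat\varphi''(t_M) - M\bigr) + H_{i_0}\bigl(x_M, \hat\varphi'(t_M) - M t_M\bigr) \le 0,
\]
where $t_M = \rho(x_M, O)$. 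Using the Lipschitz bound \eqref{Ham}(ii) on $H_{i_0}$ in the gradient variable to estimate $H_{i_0}(x_M, \hat\varphi'(t_M) - M t_M) \ge H_{i_0}(x_M, \hat\varphi'(t_M)) - C_H M t_M$, and rearranging, one obtains
\[
M\bigl(a_{i_0}(x_M) - C_H t_M\bigr) \le C_0,
\]
for a constant $C_0$ independent of $M$. Choosing $r$ small enough that $a_{i_0}(x) \ge a_{i_0}(O)/2$ and $C_H r \le a_{i_0}(O)/4$ on $E_{i_0} \cap B_r^\Gamma(O)$, the left-hand side is at least $M \, a_{i_0}(O)/4$, a contradiction for $M$ sufficiently large. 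Hence $x_M = O$ for large $M$, and the subsolution definition at $O$ applied to $\tilde\varphi_M$ reads
\[
\min\Bigl\{\min_{i} G_i\bigl(u(O), \varphi_{x_i}(O), \tilde\varphi_{M, x_i x_i}(O), O\bigr),\ F\bigl(u(O), \varphi_x(O)\bigr)\Bigr\} \le 0;
\]
since $G_{i_0}(\tilde\varphi_M) \to +\infty$ while all other $G_i$ ($i \ne i_0$) and $F$ are unchanged, passing to the limit eliminates the index $i_0$. Iterating over all $i_0 \notin \mathcal{D}$ yields \eqref{eq683}.

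The main technical obstacle I expect is the localization step: the natural penalty $-M\eta$ tends to push the maximum of $u - \tilde\varphi_M$ away from $O$, and controlling its position requires combining the quartic correction in $\hat\varphi$, the strict maximum property on $\bar B_r^\Gamma(O)$, and a careful choice of cutoff so that the quadratic region of $\eta$ captures $x_M$ for large $M$, enabling the Lipschitz estimate to close the contradiction. The supersolution statement follows by the symmetric construction with the perturbation $+M\eta$ on $E_{i_0}$ and the roles of maxima and minima interchanged.
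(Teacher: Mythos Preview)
Your strategy of eliminating nondegenerate indices by a second-order penalty is sound in spirit, but the perturbation $-M\eta$ with $\eta\ge 0$, $\eta(0)=\eta'(0)=0$, $\eta''(0)=1$ has a sign problem that the ``localization step'' cannot repair. Since $\eta>0$ on $(0,r]$, one has $u-\tilde\varphi_M=u-\hat\varphi+M\eta$, which is \emph{larger} than $u-\hat\varphi$ away from $O$; the maximum is therefore pushed off $O$, not kept there. If $\eta$ is nondecreasing on $[0,r]$ (as any such $\eta$ must be near $0$), comparing the value at $t_M$ with the value at the boundary point $r$ gives $M\bigl(\eta(r)-\eta(t_M)\bigr)\le \mathrm{osc}_{[0,r]}(u-\hat\varphi)$, forcing $t_M\to r$; for large $M$ the maximiser sits on $\partial\bar B_r^\Gamma(O)\cap E_{i_0}$, where no interior viscosity inequality is available, and your contradiction argument never starts. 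If instead you make $\eta$ peak at some interior point $t^*\in(0,r)$ and decay afterwards, then $x_M$ is attracted to a neighbourhood of $t^*$, where $\eta''\le 0$; the key term $+a_{i_0}(x_M)\,M\,\eta''(t_M)$ in your displayed inequality is then nonpositive and no contradiction follows. The quartic correction $\rho^4$ is of lower order than $M\eta$ as $M\to\infty$ and cannot compete. In short, you cannot have both $\eta''(t_M)>0$ and $x_M$ confined near $O$, because the very convexity that makes $\eta''>0$ forces $\eta$ to grow and drags the maximiser outward.

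The paper avoids this entirely with the single perturbation $\psi(x)=\epsilon\,\rho(x,O)-L\,\rho(x,O)^2$, added on \emph{all} edges at once. The decisive feature is $\psi(O)=0$ together with $\psi>0$ in a punctured neighbourhood of $O$ (since $\psi_{x_i}(O)=\epsilon>0$), so that $u-\varphi-\psi\le u-\varphi\le (u-\varphi)(O)=(u-\varphi-\psi)(O)$ near $O$: the maximum remains exactly at $O$ with no localization argument needed. Because $\psi_{x_i x_i}(O)=-2L$ on every edge, taking $L$ large drives all nondegenerate $G_i$ to $+\infty$ simultaneously (no iteration over $i_0$ is needed), while the degenerate $G_i$ change only by $o_\epsilon(1)$; sending $\epsilon\to 0$ afterwards restores the original first derivatives in both the $H_i$ and the $F$ terms. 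Your argument becomes correct if you replace $-M\eta$ by a perturbation whose value is \emph{positive} near $O$; allowing an $\epsilon$-small first derivative at $O$ is precisely what buys this, and is the paper's device.
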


\begin{rema} \ \\
(i) As we already mentioned it above, Proposition~\ref{reform-junct} shows that, in the junction condition, only the equations which are 
degenerate at $O$ play a role. In particular, if all the equations are uniformly elliptic at $O$, then
the Kirchhoff condition holds in a strong sense.\\
(ii) We point out that Proposition~\ref{reform-junct} allows to reduce the definition to first-order conditions at $O$
for both the sub and supersolution.
Proposition~\ref{junct-subdiff} below will even make this more precise by proving that the solutions behave
really as solutions to a first-order equation at the junction $O$.
\end{rema}
  
\begin{proof}
We provide the proof for subsolutions, the one for supersolutions being similar.

Let $\epsilon, L >0$ and define $\psi\in C^2(\Gamma)$ by $\psi(x):= \epsilon \rho(x,O)-L \rho(x,O)^2$,
where $\rho$ is defined in~\eqref{def-geod}.
Since $\psi(O)=0$ and $\psi(x)>0$ if $x$ is close enough to $O$, the function $u-\varphi -\psi$
still achieves a maximum at $O$. It follows from Definition~\ref{defi-visco} that
\begin{eqnarray}\label{def-visc-psi}
 && \min \Big{\{} \min_{1\leq i\leq N} G_{i}\big(u(O), \varphi_{x_i}(O)+\psi_{x_i}(O), \varphi_{x_i x_i}(O)+\psi_{x_i x_i}(O) ,O\big),\\
 && \hspace*{5.5cm} F( u(O),\mathbf{(\varphib+\psib)_{x}(O)} ) \Big{\}} \leq 0.\nonumber
\end{eqnarray}

Using that $\psi_{x_i}(O)= \epsilon$ and $\psi_{x_i x_i}(O)=-2L$, we have,
on one hand,
\begin{eqnarray*}
&&   F(u(O),\mathbf{(\varphib+\psib)_{x}(O)}) = F(u(O),\mathbf{\varphib_{x}(O)}+\e \1).
\end{eqnarray*}

On the other hand,
\begin{eqnarray}\label{estimG_i-psi}
  &&  G_{i}\big(u(O), \varphi_{x_i}(O)+\psi_{x_i}(O), \varphi_{x_i x_i}(O)+\psi_{x_i x_i}(O) ,O\big)\\
 & =& \lambda u(O)-a_i(O)\varphi_{x_i x_i}(O) + 2a_i(O)L + H_i(O, \varphi_{x_i}(O)+\epsilon) \nonumber\\
 & \geq &  G_{i}\big(u(O), \varphi_{x_i}(O), \varphi_{x_i x_i}(O) ,O\big) +  2a_i(O)L +o_\epsilon(1),\nonumber
\end{eqnarray}
thanks to the continuity of $H_i$.

Taking $L$ large enough, for all $1\leq i\leq N$ such that $a_i(O)>0$ and all $\epsilon \in (0,1)$, we obtain that
$$
G_{i}\big(u(O), \varphi_{x_i}(O)+\psi_{x_i}(O), \varphi_{x_i x_i}(O)+\psi_{x_i x_i}(O) ,O\big)>0.
$$

And, if $a_i(O)=0$, then the right-hand side of~\eqref{estimG_i-psi} reads
$$
\lambda u(O)+ H_i(O, \varphi_{x_i}(O)) + o_\epsilon(1).
$$

Hence, \eqref{def-visc-psi} reduces to
$$ \min  \Big{\{} \min_{i\in \mathcal{D}}\left\{ \lambda u(O)
+ H_i(O, \varphi_{x_i}(O)) + o_\epsilon(1)\right\}, F(u(O),\mathbf{\varphib_{x}(O)}+\e \1)  \Big{\}} \leq 0,$$
and the result follows by letting $\epsilon$ tend to $0$.
This ends the proof.
\end{proof}

As usual, it is possible to state an equivalent definition
to Definition~\ref{defi-visco} and~\eqref{eq683}-\eqref{eq684}
in terms of sub/superjets,
the definition of which is recalled now (see~\cite{cil92} for details).

Given an interval $I \subset \R$ and $u\in USC(I)$, we recall that $(p,X) \in \R\times \R$ is in the superjet
$J^{2,+}_I u(x_0)$ of $u$ at $x_0 \in I$ if
$$
u(x) \leq u(x_0) + p(x - x_0) + \frac{1}{2} X (x-x_0)^2 + o(|x - x_0|^2),
$$
for all $x$ in an $I$-neighborhood of  $x_0$. 
In the case of junctions, we say that $({\bf p},{\bf X})  = ((p_1, \cdots , p_N), (X_1, \cdots , X_N)) \in \R^N\times \R^N$
is in the  superjet $J^{2,+}_\Gamma u(O)$
of $u\in USC(\Gamma)$ at $O$ if, for each $1 \leq i \leq N$, then
$(p_i,X_i)\in J^{2,+}_{\bar J_i} u_i(0)$, i.e.,
$$
u_i(x_i) \leq u(O) + p_i x_i + \frac{1}{2} X_i x_i^2 + o(x_i^2),$$
for all $x_i$ in an $\bar J_i$-neighborhood of $0$.
Similar definitions can be stated for the subjet $J^{2,-}_\Gamma v(O)$ of $v\in LSC(\Gamma)$ by reversing the inequalities.

Let $u \in USC(\Gamma)$ and $v \in LSC(\Gamma)$ and denote
$$
\mathbf{\varphib_{xx}(O)}:=(\varphi_{x_1 x_1}(O),\cdots ,  \varphi_{x_N x_N}(O))
$$ 
for $\varphi\in C^2(\Gamma)$.
Using the characterization 
\begin{eqnarray}
  \label{super-equiv}  \hspace*{0.9cm}J^{2,+}_\Gamma u(O) & = & \big\{(\mathbf{\varphib_{x}(O)},\mathbf{\varphib_{xx}(O)})
 :\\
&& \nonumber \hspace*{0.5cm}\text{$\varphi\in C^2(\Gamma)$ and $u-\varphi$ has a local maximum at $O$} \big\},\\
\label{sub-equiv}  J^{2,-}_\Gamma v(O) & = & \big\{(\mathbf{\varphib_{x}(O)}, \mathbf{\varphib_{xx}(O)}):\\
&& \nonumber \hspace*{0.5cm}\text{$\varphi\in C^2(\Gamma)$ and $v-\varphi$ has a local minimum at $O$} \big\},
\end{eqnarray}
it is possible to state an equivalent definition of viscosity sub/supersolutions
in terms of sub/superjets. We leave the statement to the reader.

From now on, for the sake of notations, when there is no ambiguity, we will omit
  the dependence of the sub/superjets with respect to $\Gamma$,
  writing simply  $J^{2,+} u(O)$, $ J^{2,-} v(O)$.


Proposition~\ref{reform-junct} shows that, at $O$, the viscosity inequalities can be expressed by using only the
first derivatives of the test-functions. A natural (but not straightforward) consequence is that the definition of viscosity
sub- and supersolutions at $O$ can be expressed only in terms of $C^1$ test-functions, or equivalently
(first-order) sub/superdifferentials.

We first define precisely the sub/superdifferentials at $O$ in a similar way as the sub/superjets.
If $u \in USC(\Gamma)$, then we say that ${\bf p} = (p_1, \cdots , p_N)$ is in the  superdifferential $D^+ u(O)$
of $u$ at $O$ if, for each $1 \leq i \leq N$, we have
$$
u_i(x_i) \leq u(O) + p_i x_i + o(x_i) \quad \mbox{for all $x_i$ in an $\bar J_i$-neighborhood of  $0$.} 
$$

The subdifferential  $D^- v(O)$ of $v\in LSC(\Gamma)$ at $O$ is defined by reversing the inequalities.

Similarly to~\eqref{super-equiv}-\eqref{sub-equiv}, we have
\begin{eqnarray*}
&& D^{+} u(O)=\big\{\mathbf{\varphib_{x}(O)}:\ \text{$\varphi\in C^1(\Gamma)$ and $u-\varphi$ has a local maximum at $O$} \big\},\\
&& D^{-} v(O)=\big\{\mathbf{\varphib_{x}(O)}:\ \text{$\varphi\in C^1(\Gamma)$ and $v-\varphi$ has a local minimum at $O$} \big\}.
\end{eqnarray*}
Note that the above property may not be true with a $C^2$-test-function (it is possible for instance
that $ D^{+} u(O)\not=\emptyset$ while $J^{2,+} u(O)=\emptyset$).
However, it is possible to characterize the junction conditions~\eqref{eq683}-\eqref{eq684}
using only $C^1$ test-functions or, equivalently, (first-order) sub/superdifferentials. This is the aim of the following result.

\begin{prop}\label{junct-subdiff}
Let $u \in USC(\Gamma)$  be a viscosity subsolution to problem~\eqref{pde-junct}
and let ${\bf p} = (p_1, \cdots , p_N)\in D^+ u(O)$. Then
\begin{eqnarray}
\label{eq683bis}
&& \min \Big{\{} \min_{i \in \mathcal{D}}  \left\{\lambda u(O) + H_i(O, p_i)\right\} , F(u(O),\mathbf{p}) \Big{\}} \leq 0.
\end{eqnarray}

Let $v \in LSC(\Gamma)$  be a viscosity supersolution to problem~\eqref{pde-junct}
and let  ${\bf q} = (q_1, \cdots , q_N)\in D^- v(O)$. Then
\begin{eqnarray}
\label{eq684bis}
&&\max \Big{\{} \max_{i \in \mathcal{D}}  \left\{\lambda u(O) + H_i(O, q_i)\right\} , F(v(O),\mathbf{q})  \Big{\}} \geq 0.
\end{eqnarray}
\end{prop}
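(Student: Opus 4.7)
The strategy is to reduce Proposition~\ref{junct-subdiff} to Proposition~\ref{reform-junct} by promoting any ${\bf p} = (p_1,\dots,p_N) \in D^+ u(O)$ to the gradient at $O$ of a $C^2$ test-function. More precisely, I will construct $\varphi \in C^2(\Gamma)$ with $\mathbf{\varphib_{x}(O)} = {\bf p}$ such that $u - \varphi$ attains a local maximum at $O$. Since the conclusion~\eqref{eq683} of Proposition~\ref{reform-junct} involves only the first derivatives $\varphi_{x_i}(O) = p_i$ together with the value $u(O)$, applying that proposition to such $\varphi$ yields~\eqref{eq683bis} at once. The supersolution case~\eqref{eq684bis} is entirely analogous, so I focus on the subsolution statement.

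The construction of $\varphi$ is carried out edge by edge. Set
$$
g_i(x_i) := u_i(x_i) - u(O) - p_i x_i, \qquad x_i \in [0, \ell_i],
$$
so $g_i \in USC([0,\ell_i])$, $g_i(0) = 0$, and the assumption ${\bf p} \in D^+ u(O)$ reads $\limsup_{x_i \to 0^+} g_i(x_i)^+/x_i = 0$. A standard smoothing lemma then provides $\alpha_i \in C^\infty([0,\ell_i])$ with $\alpha_i(0) = 0$ and $\alpha_i(x_i) \geq g_i(x_i)^+/x_i$ on some $(0, \delta_i]$. Setting $\omega_i(x_i) := x_i \alpha_i(x_i)$ one has $\omega_i \in C^2([0,\ell_i])$ with $\omega_i(0) = \omega_i'(0) = 0$ and $\omega_i(x_i) \geq g_i(x_i)^+$ near $0$. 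Define $\varphi$ by $\varphi_i(x_i) := u(O) + p_i x_i + \omega_i(x_i)$ on $\bar E_i$. Each $\varphi_i$ is $C^2$ and all values $\varphi_i(0)$ coincide with $u(O)$, hence $\varphi \in C^2(\Gamma)$ with $\varphi_{x_i}(O) = p_i$, and
$$
(u - \varphi)(x) = g_i(x_i) - \omega_i(x_i) \leq 0 = (u - \varphi)(O)
$$
for $x \in \bar E_i$ near $O$, as required.

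The main obstacle is the smoothing lemma just invoked: given an upper semicontinuous $\eta:(0,\delta]\to\R$ with $\limsup_{x\to 0^+}\eta(x)\leq 0$, one must produce $\alpha \in C^\infty([0,\delta])$ with $\alpha(0)=0$ and $\alpha\geq \eta$ on $(0,\delta]$. This is a classical viscosity-theoretic tool (in the spirit of the regularizations in~\cite{cil92}) and is obtained by picking a decreasing sequence $\delta_n \downarrow 0$ with $\eta(x)\leq 1/n$ on $(0,\delta_n]$, and assembling a $C^\infty$ majorant via a partition of unity subordinate to a suitable cover of $(0,\delta]$ by the nested intervals $(\delta_{n+2}, \delta_n)$. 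Once this ingredient is in place, the three steps above conclude the proof.
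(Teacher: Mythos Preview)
Your argument is correct, but it takes a different route from the paper's. You construct a $C^2$ test-function $\varphi$ with $\varphi_{x_i}(O) = p_i$ \emph{exactly}, by invoking a smoothing lemma to produce on each edge a $C^2$ majorant $\omega_i$ of $g_i$ with $\omega_i(0)=\omega_i'(0)=0$. The paper bypasses this machinery entirely: for any $X\in\R$ and small $\e>0$ it simply sets
\[
\varphi_i(x_i)=u(O)+(p_i+\e)x_i+\tfrac12 X x_i^2,
\]
so that the $\e$-shift alone forces $u-\varphi$ to have a local maximum at $O$ (since $u_i(x_i)-\varphi_i(x_i)\le -\e x_i + o(x_i)+O(x_i^2)$). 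Applying Proposition~\ref{reform-junct} then yields~\eqref{eq683bis} with $p_i+\e$ in place of $p_i$, and one lets $\e\to 0$ using the continuity of $H_i$ and $F$. Your approach has the advantage of hitting ${\bf p}$ on the nose and thus needs no limiting step; the paper's quadratic-plus-shift construction is shorter, avoids the auxiliary smoothing lemma, and makes it transparent that no control on second derivatives is ever needed.
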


\begin{proof} We provide the proof only in the subsolution case, the supersolution one being analogous with obvious adaptations.

If ${\bf p} = (p_1, \cdots , p_N) \in D^+ u(O)$, then, for each $1 \leq i \leq N$, we have
$$
u_i(x_i) \leq u(O) + p_i x_i + o(x_i) \quad \mbox{for all $x_i$ in an $\bar J_i$-neighborhood of  $0$.} 
$$
Pick any $X \in \R$, $0<\e \ll 1$ and define $\varphi(x)$ by
$$
\varphi_i (x_i)= u(O) + (p_i +\e) x_i +\frac12 Xx_ix_i,  \quad \text{for any $x_i\in\bar J_i$, $1 \leq i \leq N$.}
$$
We have, in an $\bar J_i$-neighborhood of  $0$,
\begin{align*}
u_i(x_i)-\varphi_i (x_i)= & u_i(x_i)-u(O) - (p_i +\e) x_i - \frac12 Xx_ix_i \\
\leq & -\e x_i - \frac12 Xx_ix_i + o(x_i)
\leq 0 = u_i(0)-\varphi_i (0),
\end{align*}
if $x_i$ is small enough.
Hence $O$ is a maximum point of $u-\varphi$ and, since $\varphi_{x_i}(O)=p_i+\e$, we obtain
$$\min \Big{\{} \min_{i \in \mathcal{D}}  \left\{\lambda u(O) + H_i(O, p_i+\e)\right\} , F(u(O),\mathbf{p}+\e\1) \Big{\}} \leq 0.
$$
And the conclusion follows by letting $\e$ tend to $0$.
\end{proof}

We conclude this section with some remarks on the Dirichlet problem at each $\ver_i\in \VVb$. Since the equations on $E_i$ can be degenerate,
the Dirichlet boundary conditions may not be satisfied in a classical sense. Of course, we use the notion of boundary condition in the viscosity sense;
we are not going to rewrite here the definition of such boundary conditions and we just refer the reader to the User's guide of Crandall, Ishii and 
Lions~\cite[Section 7]{cil92} for a complete description of them. Instead we provide various properties under suitable assumptions.
\begin{prop}\label{PropDir}Assume~\eqref{steady}. Let $u \in USC(\Gamma)$ be a viscosity subsolution
and $v \in LSC(\Gamma)$  be a viscosity supersolution
to Problem~\eqref{pde-junct}. Then the followings hold
\begin{itemize}
\item[(i)] if $a_i(\ver_i)>0$, then $u(\ver_i) \leq h_i \leq v(\ver_i)$,\\
\item[(ii)] If $a_i(\ver_i)=0$ and if \eqref{Hcoercive} holds in a neighborhood of $\ver_i$, then $u(\ver_i) \leq h_i$. Moreover
if $\tilde u:\Gamma \to \R$ is defined by
$$
\tilde u(x):= 
\begin{cases}
u(x) & \hbox{if } x\notin \VVb,\\
{\displaystyle \limsup_{x \to \ver_i, x\in E_i} u(x)} & \hbox{if }x=\ver_i, \; \hbox{for any $1\leq i \leq N$,},
\end{cases}
$$
then $\tilde u$ is still a subsolution of Problem~\eqref{pde-junct}.
\end{itemize}
\end{prop}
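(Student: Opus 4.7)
The proof naturally splits into three contradiction arguments. In each, we assume the desired inequality fails, construct a one-parameter family of $C^2$ test functions, and use the viscosity inequality at the resulting max/min point to produce an impossible bound; the unifying scheme is to exploit either the strict ellipticity ($a_i(\ver_i)>0$, in (i)) or the coercivity of $H_i$ (in (ii)) to grow one side of the inequality without bound.

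For the subsolution half of (i), suppose $u(\ver_i)>h_i$ and set
\[
\varphi_i(x_i):=u(\ver_i)+A(\ell_i-x_i)-\frac{B}{2}(\ell_i-x_i)^2,
\]
extended smoothly to $\Gamma$ by a cutoff near $\ver_i$. The USC function $u_i-\varphi_i$ attains its maximum on the compact interval $[\ell_i-\delta,\ell_i]$ at some $\bar x_i$, and choosing $A>\delta^{-1}\sup_{\bar E_i}(u_i-u(\ver_i))_+ + B\delta/2$ forces $\bar x_i\in(\ell_i-\delta,\ell_i]$. Whether $\bar x_i=\ver_i$ (so that $u(\ver_i)>h_i$ lets us discard the Dirichlet alternative in Definition~\ref{defi-visco}) or $\bar x_i$ lies in the interior of $E_i$ (where the PDE applies directly), the viscosity inequality reads
\[
\lambda u(\bar x_i)+B\,a_i(\bar x_i)+H_i\bigl(\bar x_i,-A+B(\ell_i-\bar x_i)\bigr)\le 0.
\]
Shrinking $\delta$ so that $a_i(\bar x_i)\ge a_i(\ver_i)/2>0$, and using \eqref{Ham} to bound the Hamiltonian term by $C(1+A+B\delta)$, then coupling $A=B\delta$ and letting $B\to\infty$, the contribution $B\,a_i(\ver_i)/2$ outgrows all others and the inequality fails. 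The supersolution half of (i) is symmetric, with test function $\varphi_i(x_i)=v(\ver_i)-A(\ell_i-x_i)+(B/2)(\ell_i-x_i)^2$, the LSC minimum, and $v(\ver_i)<h_i$ reducing $\max\{G_i,v-h_i\}\ge 0$ to $G_i\ge 0$, which forces $G_i\to -\infty$ as $B\to\infty$. The first statement of (ii) follows by the same scheme with the purely linear test function $\varphi_i(x_i)=u(\ver_i)+A(\ell_i-x_i)$ (the quadratic penalty is useless when $a_i(\ver_i)=0$); the contradiction now comes from \eqref{Hcoercive}, which gives $H_i(\bar x_i,-A)\ge C_H^{-1}A-C_H\to+\infty$ as $A\to\infty$.

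For the last assertion of (ii), first observe that $\tilde u\in USC(\Gamma)$: a boundary vertex $\ver_j$ is approached in $\Gamma$ only through the edge $E_j$, so the limsup defining $\tilde u(\ver_j)$ coincides with $\limsup_{x\to\ver_j}\tilde u(x)$. By USC of $u$ we also have $\tilde u\le u$ on $\Gamma$, with equality off $\VVb$. At points $x\notin\VVb$, $\tilde u=u$ in a neighborhood and the subsolution condition transfers immediately. At a boundary vertex $\ver_j$, given $\varphi\in C^2(\Gamma)$ with $\tilde u-\varphi$ attaining a local max at $\ver_j$, set $\psi:=\varphi+(u(\ver_j)-\tilde u(\ver_j))$; since $\tilde u=u$ on $(E_j\setminus\{\ver_j\})\cap U$ for some neighborhood $U$ of $\ver_j$, the function $u-\psi$ also has a local max at $\ver_j$, and $\psi$ shares the first and second derivatives of $\varphi$ at $\ver_j$. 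The subsolution property of $u$ therefore gives $\min\{G_j(u(\ver_j),\ldots),u(\ver_j)-h_j\}\le 0$, and the desired inequality for $\tilde u$ follows from $\tilde u(\ver_j)\le u(\ver_j)$ combined with the monotonicity of $G_j$ in its value argument (since $\lambda>0$).

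The main obstacle is the three-parameter balancing in the contradiction arguments: upper semi-continuity of $u$ at $\ver_i$ provides no quantitative modulus, so one must fix $\delta$ small first (to absorb the Lipschitz errors of $a_i$ and $H_i$), then couple $A$ to $B\delta$, and only afterwards let $B\to\infty$, so that the ellipticity gain $B\,a_i(\ver_i)$ (or the coercivity gain $C_H^{-1}A$) dominates all correction terms independently of the unknown USC modulus of $u$ at $\ver_i$.
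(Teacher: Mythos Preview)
Your proof is correct and follows essentially the same approach as the paper. Your test function $\varphi_i(x_i)=u(\ver_i)+A(\ell_i-x_i)-\tfrac{B}{2}(\ell_i-x_i)^2$ is, up to an additive constant and the reparametrization $A=L$, $B=2LK$, exactly the paper's $\psi(x)=L(\rho(x,\ver_i)-K\rho(x,\ver_i)^2)$; the paper phrases the argument directly (forcing the maximum to be at $\ver_i$ and the PDE alternative to be positive) rather than by contradiction, but this is cosmetic. For the $\tilde u$ assertion the paper is terser than you: having already proved $u(\ver_j)\le h_j$, it simply notes $\tilde u(\ver_j)\le u(\ver_j)\le h_j$, so the Dirichlet alternative in the subsolution condition at $\ver_j$ is automatically satisfied for \emph{any} test function, and your test-function shift argument, while correct, is not needed.
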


\begin{proof} To prove (i) and the first part of (ii), we consider the function $\psi (x)= L(\rho(x,\ver_i)-K \rho(x,\ver_i)^2)$
of Lemma~\ref{fct-test-quad} (with $y$ replaced by $\ver_i$ there)
for some constants $L,K$ large enough,
and we look for maximum points of $u(x)-\psi (x)$ for $x$ such that $0\leq \rho(x,\ver_i) \leq (4K)^{-1}$.
We first choose $K$ large enough to have $(4K)^{-1}< \rho(\ver_i,O)$; in that way, the maximum points of
$u-\psi$ are necessarily in $\bar E_i \setminus \{O\}$
and $\psi (x)= L((\ell_i -x_i)- K (\ell_i -x_i)^2)$.

Using Lemma~\ref{fct-test-quad},
if $L$ is large enough, then
we can write down a viscosity subsolution inequality, either if $\bar x\in E_i$, or $\bar x=\ver_i$. In both cases, the term
coming from the equation reads
\begin{eqnarray*}
&& \lambda u(\bar x)-a_i(\bar x) \psi_{x_i x_i} (\bar x) + H_i(\bar x, \psi_{x_i}(\bar x))\\
&=& \lambda u(\bar x)+ 2a_i(\bar x) LK + H_i(\bar x, -L + 2LK(\ell_i -\bar x_i)).
\end{eqnarray*}
Hence there are two cases:

\smallskip

\noindent
(a) If $a_i(\ver_i)>0$, since $\bar x_i \to \ell_i$ when $L\to +\infty$ (see~\eqref{psixbarre}), then the above quantity cannot be less than $0$ for $L$ large enough if $K$ is
chosen sufficiently large (with a size depending only $H_i$ and $a_i$). Therefore, we necessarily have $\bar x = \ver_i$ when $L$ is large
enough, and the boundary condition inequality reads $u(\ver_i) \leq h_i$.

\smallskip

\noindent
(b) If $a_i(\ver_i)=0$, then we just use that $2 a_i(\bar x) LK\geq 0$ (here the choice of $K$ is not important)
and $\vert \psi_{x_i}(\bar x) \vert \geq L/2$ (see~\eqref{gradpsi}). Using \eqref{Hcoercive}, it implies that the above 
expression cannot be less than $0$ and again we necessarily have $\bar x = l_i$ and $u_i(\ell_i) \leq h_i$.

The proof of the inequality $h_i \leq v(\ver_i)$ follows from an easy adaptation of the above argument when $a_i(\ver_i)>0$. 

Concerning $\tilde u$, it is clear that it is still a subsolution of Problem~\eqref{pde-junct} since, for any $i$,
$\tilde u(\ver_i) \leq u(\ver_i) \leq h_i$\footnote{The reader may remark that, if $\tilde u(\ver_i) < h_i$, then the subsolution inequality does not impose sufficient constraint on the subsolution and it may take any ``artificial'' value in the interval $[\tilde u(\ver_i), h_i]$.}.
\end{proof}

Motivated by the above result, we use in the sequel the assumption
\begin{eqnarray}\label{hyp-dir}
&& \\
\nonumber && \hbox{For any $i$, either $a_i(\ver_i)>0$, or \eqref{Hcoercive} holds in a neighborhood of $\ver_i$. }
\end{eqnarray}
\section{Comparison for the degenerate Kirchhoff-type problem on a junction}
\label{sec-comp}

\begin{teo}\label{teo:comp}
Assume~\eqref{steady} and \eqref{hyp-dir}. Let $u \in USC(\Gamma)$ be a viscosity subsolution
and $v \in LSC(\Gamma)$  be a viscosity supersolution
to Problem~\eqref{pde-junct}.
If $\tilde u$ is the function defined in Proposition~\ref{PropDir} then $\tilde u\leq v$ on $\Gamma$. In particular,
$u \leq v$ on $\Gamma \setminus \VVb$.
\end{teo}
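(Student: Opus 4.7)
The strategy is proof by contradiction: suppose $M := \max_\Gamma(\tilde u - v) > 0$, and rule out every possible location of a maximizer. At a boundary vertex $\ver_i$, Proposition~\ref{PropDir} immediately gives $\tilde u(\ver_i) \leq h_i \leq v(\ver_i)$, contradicting $M>0$. At an interior point of an edge $E_i$, the classical doubling $\Phi_\varepsilon(x,y) = u(x) - v(y) - |x - y|^2/\varepsilon$ restricted to $\bar E_i \times \bar E_i$, combined with Ishii's lemma, produces the standard contradiction using the Lipschitz dependence~\eqref{Ham} of $H_i$, the representation $a_i = \sigma_i^2$ with $\sigma_i$ Lipschitz~\eqref{hyp-diff}, and the strict monotonicity $\lambda>0$. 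Hence any maximizer must lie at the junction $O$.

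Near $O$, the plan is to first localize by adding a small geodesic penalty so that $O$ becomes a strict local maximum of $\tilde u - v$, and then to perform a doubling of variables tailored to the junction. For each pair $(i,j)$, I maximize $u(x) - v(y) - \rho(x,y)^2/\varepsilon$ on $\bar E_i \times \bar E_j$ plus a lower-order localizer. Standard arguments send the maximizing pairs to $(O,O)$ and extract, in the limit, vectors $\mathbf{p} \in D^+ u(O)$ and $\mathbf{q} \in D^- v(O)$ to which Proposition~\ref{junct-subdiff} applies. The \emph{key new ingredient} is the following: on each non-degenerate edge $E_i$ (i.e., $i \notin \mathcal{D}$, $a_i(O)>0$), the equation is uniformly elliptic near $O$, so a one-dimensional Hopf-type argument applied at the endpoint $O$, where $\tilde u - v$ attains its maximum on $\bar E_i$, forces both $u_i$ and $v_i$ to be differentiable at $0$ from the interior of $E_i$ and yields $p_i \leq q_i$ (indeed strictly). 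This effectively "erases" the non-degenerate edges from the first-order junction conditions of Proposition~\ref{junct-subdiff}.

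To conclude, I exploit the dichotomy of Proposition~\ref{junct-subdiff}. Case (a): the subsolution inequality is realized through $F(u(O),\mathbf p) \leq 0$ and the supersolution gives $F(v(O),\mathbf q) \geq 0$. Since $u(O) > v(O)$ and $p_i \leq q_i$ for $i \notin \mathcal{D}$, one uses the coercivity~\eqref{Hcoercive} on degenerate edges together with the blow-up property~\eqref{hyp-kirch-gene}(ii) of $F$ to select components on $\mathcal{D}$ so that $p_i \leq q_i$ for every $i$, with strict inequality in at least one coordinate; the strict monotonicity~\eqref{hyp-kirch-gene}(i) of $F$ then yields $0 \geq F(u(O),\mathbf p) > F(v(O),\mathbf q) \geq 0$, a contradiction. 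Case (b): for some $i \in \mathcal{D}$ we have $\lambda u(O) + H_i(O,p_i) \leq 0$, while the supersolution inequality on some $j \in \mathcal{D}$ yields $\lambda v(O) + H_j(O,q_j) \geq 0$. Adapting the first-order Lions--Souganidis junction argument, and using coercivity~\eqref{Hcoercive} together with the subdifferential bounds from the doubling procedure, I obtain $\lambda(u(O) - v(O)) \leq 0$, again contradicting $M>0$.

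The main obstacle will be the Hopf-lemma step in non-degenerate edges: translating the fact that $\tilde u - v$ attains its maximum at the endpoint $O$ of $\bar E_i$ into precise one-sided differentiability of $u_i$ and $v_i$ at $0$, and the sign statement $p_i \leq q_i$, for merely semicontinuous viscosity sub- and supersolutions of a fully nonlinear uniformly elliptic equation on an interval. Ensuring moreover that the subdifferential data extracted by the doubling-of-variables procedure is fully compatible with the first-order junction conditions of Proposition~\ref{junct-subdiff} is the delicate technical point on which the whole comparison argument hinges.
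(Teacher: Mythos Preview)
Your proposal has the right architecture and correctly identifies the crucial new ingredient (Hopf-type information on nondegenerate edges), but there are two genuine gaps.

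\textbf{Boundary vertices.} Your claim that Proposition~\ref{PropDir} ``immediately gives $\tilde u(\ver_i)\le h_i\le v(\ver_i)$'' is only correct when $a_i(\ver_i)>0$. When $a_i(\ver_i)=0$, Proposition~\ref{PropDir}(ii) gives $\tilde u(\ver_i)\le h_i$ for the subsolution but says \emph{nothing} about $v$. The paper handles this separately: since then $v(\ver_i)<h_i$, the viscosity supersolution condition at $\ver_i$ reduces to a state-constraint inequality $\lambda v(\ver_i)+H_i(\ver_i,v_x)\ge 0$, and one concludes via a one-dimensional adaptation of~\cite[Theorem 7.9]{cil92} together with the normalization $\tilde u(\ver_i)=\limsup_{x\to\ver_i,\,x\in E_i}u(x)$.

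\textbf{The junction step.} The paper does \emph{not} use a doubling $u(x)-v(y)-\rho(x,y)^2/\varepsilon$ at $O$, and your sentence ``standard arguments send the maximizing pairs to $(O,O)$ and extract vectors $\mathbf p\in D^+u(O)$, $\mathbf q\in D^-v(O)$'' hides the real difficulty: doubling on $\bar E_i\times\bar E_j$ produces information only about $u_i$ and $v_j$, and there is no mechanism to assemble these into a single coherent $\mathbf p$ and $\mathbf q$ on which Proposition~\ref{junct-subdiff} can be invoked. The paper's approach is entirely different and is the content of Lemma~\ref{inegGi}: one works directly with the one-sided quantities
\[
\underline p_i,\ \bar p_i,\ \underline q_i,\ \bar q_i
\]
(the $\liminf/\limsup$ of difference quotients of $u_i,v_i$ at $0$), proves they are finite, that $\underline p_i=\bar p_i$ and $\underline q_i=\bar q_i$ on nondegenerate edges, and that the Hamiltonian inequalities~\eqref{ineg-sous-deg}--\eqref{ineg-sur-deg} hold for every $p\in[\underline p_i,\bar p_i]$ (resp.\ $q\in(\underline q_i,\bar q_i)$) on degenerate edges. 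This is the Lions--Souganidis machinery, obtained by maximizing $u_i(x_i)-px_i$ (or $u_i(x_i)-px_i+Kx_i^2$) on shrinking intervals, not by doubling. The dichotomy is then clean: either (Case~1) some $i\in\mathcal D$ has $\underline q_i\le\bar p_i$, and subtracting~\eqref{ineg-sous-deg} from~\eqref{ineg-sur-deg} at $p=q=\underline q_i$ gives $\lambda M\le 0$; or (Case~2) $\bar p_i<\underline q_i$ for all $i\in\mathcal D$, one picks $r_i<s_i$ in $(\bar p_i,\underline q_i)$ so that the Hamiltonian terms in~\eqref{eq683bis}--\eqref{eq684bis} are strictly positive (resp.\ negative), uses Hopf on $i\notin\mathcal D$ to get $\bar p_i<\underline q_i$ there too, and concludes via the strict monotonicity of $F$. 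Your Cases~(a) and~(b) gesture at this, but the vague ``select components on $\mathcal D$'' and ``adapting the first-order Lions--Souganidis junction argument'' are exactly where the substance lies.
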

\begin{rema} The admittedly strange form of Theorem~\ref{teo:comp}, and in particular the comparison of $u$ and $v$ which
only holds on $\Gamma \setminus \VVb$ is connected to the usual difficulty arising in the Dirichlet problem for degenerate equations where
losses of boundary conditions may arise. We do not insist on that feature since it is not the main new point in this result and we refer to 
\cite{bp90, bb95, bdl04, br98} where such problems are studied in different contexts.

\end{rema}

The whole section is devoted to the proof of Theorem~\ref{teo:comp}
adapting some ideas introduced in~\cite{ls17} and put in use in~\cite[Section 15.3]{bc24}. 

To simplify the notations, we write $u$ instead of $\tilde u$ and, as usual, we argue by contradiction, assuming that
$$
M=\sup_{\Gamma} \{ u - v \} > 0.
$$
Since $\Gamma$ is compact, the supremum is attained at some point $x_0 \in \Gamma$. 

If $x_0 \in \Gamma \setminus \VV$, where $\VV$ is the set of vertices, then standard arguments
using the classical doubling of variables apply and provide the result  (see~\cite{cil92}).

If $x_0 = \ver_i$ for a boundary vertex $\ver_i\in \VVb$, we remark that necessarily $a_i(\ver_i)=0$; otherwise we would have
$u(\ver_i)-v(\ver_i)\leq 0$ by Proposition~\ref{PropDir}. Using again Proposition~\ref{PropDir}, we have $u(\ver_i) \leq h_i$ and therefore
$v(\ver_i)<h_i$. Hence the boundary condition in the viscosity sense for $v$ at $\ver_i$ reduces to 
$$ \lambda v(\ver_i)+ H_i(\ver_i, v_x) \geq 0,$$
a boundary condition of state-constraint type. Since we are in a $1$-d situation and since $u(\ver_i)={\displaystyle \limsup_{x \to \ver_i, x\in E_i} u(x)}$,
an easy adaptation of the proof of \cite[Theorem 7.9]{cil92} leads to a contradiction.

Therefore, the specific and difficult case we will focus on is when $x_0 = O$
is the unique maximum point of $u-v$, i.e., 
\begin{equation*}
M = (u - v)(O) > (u-v)(x), \quad x\in\Gamma\setminus \{O\}.
\end{equation*}

Before continuing the proof of the theorem, we first
give some technical results about the viscosity inequalities which are
satisfied at the junction on each branch in terms of sub- and superdifferentials of the functions $u$ and $v$, respectively.

To do so, for each $i$, we define
\begin{eqnarray*}
&& \bar p_i = \limsup_{x_i \to 0^+} \frac{u_i(x_i) - u_i(0)}{x_i}, \qquad \underline p_i = \liminf_{x_i \to 0^+} \frac{u_i(x_i) - u_i(0)}{x_i}, \\
&& \bar q_i = \limsup_{x_i \to 0^+} \frac{v_i(x_i) - v_i(0)}{x_i}, \qquad \underline q_i = \liminf_{x_i \to 0^+} \frac{v_i(x_i) - v_i(0)}{x_i}.
\end{eqnarray*}


\begin{lema}\label{inegGi}
Assume~\eqref{steady}. Let $u \in USC(\Gamma)$ be a viscosity subsolution
and $v \in LSC(\Gamma)$  be a viscosity supersolution
to problem~\eqref{pde-junct}
such that $\max_{\Gamma} (u-v) =(u-v)(O)$.
Then

\begin{itemize}

\item[(i)] {\bf (Subsolutions)} For all $1\leq i\leq N$, $\underline p_i$ and $\bar p_i$ are finite, and $D^+u_i(O)=[\bar p_i ,+\infty)$.

\item[(ii)] {\bf (Supersolutions)}  For all $1\leq i\leq N$, $-\infty < \underline p_i \leq \underline q_i$,
and $D^-v_i(O)=(-\infty, \underline q_i]$ if $\underline q_i <+\infty$, $D^-v_i(O)=\R$ otherwise.
  
\item[(iii)] {\bf (Degenerate branches)} If $i \in \mathcal{D}$, then, for any $p \in [\underline p_i, \bar p_i ]$,
\begin{eqnarray}\label{ineg-sous-deg}
    && \lambda u(O)+ H_i(O, p) \leq 0.
\end{eqnarray}
  In the same way, for the supersolution,  for any $q \in (\underline q_i, \bar q_i )$, we have
\begin{eqnarray}\label{ineg-sur-deg}
    && \lambda v(O)+ H_i(O, q) \geq 0.
\end{eqnarray}
\item[(iv)] {\bf (Nondegenerate branches)} If $i \notin \mathcal{D}$, then $\underline p_i = \bar p_i$ and $\underline q_i = \bar q_i$.

\end{itemize}
\end{lema}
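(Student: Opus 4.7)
The equalities $D^+u_i(O) = [\bar p_i, +\infty)$ and the corresponding description of $D^-v_i(O)$ are immediate from the one-dimensional definition of sub/superdifferentials at the endpoint $0 \in \bar J_i$ (indeed, $p \in D^+u_i(0)$ iff $\limsup_{x_i \to 0^+}(u_i(x_i)-u(O))/x_i \leq p$, i.e., $p \geq \bar p_i$), so the real content of the lemma lies in the finiteness statements, the first-order inequalities (iii), and the differentiability (iv). I would split into the nondegenerate and degenerate cases, handling the former by elliptic regularity and the latter by the coercivity of $H_i$.

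For (iv), when $i \notin \mathcal{D}$, continuity of $a_i$ gives $a_i \geq c > 0$ on a small $[0, \delta] \subset \bar J_i$, so the equation is uniformly elliptic in 1D there. I would first invoke interior viscosity regularity to obtain $u_i, v_i \in C^{1,\alpha}_{\rm loc}((0, \delta))$ and then apply a Hopf/boundary Lipschitz argument to the nonnegative function $w := v - u + M$ (which vanishes at $O$) to produce classical one-sided derivatives $u_i'(0^+), v_i'(0^+)$. This gives $\underline p_i = \bar p_i$, $\underline q_i = \bar q_i$, all finite, and incidentally settles the finiteness statements of (i)--(ii) along nondegenerate edges.

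For $i \in \mathcal{D}$, the main tool is \eqref{Hcoercive}. To bound $\bar p_i < +\infty$, I would maximize $x \mapsto u_i(x) - Kx$ on a small $[0, r] \subset \bar J_i$: the subsolution inequality at an interior maximum with a linear test function, combined with \eqref{Hcoercive}, forces $K$ to be no larger than an explicit constant $K_1$ depending only on $\lambda$, $\|u\|_\infty$ and $C_H$, so that for $K > K_1$ the maximum must be at $0$ and $\bar p_i \leq K_1$. A parallel doubling-of-variables argument, using the estimate $a_i(x) \leq C_a^2 x^2$ near $O$ (consequence of $\sigma_i$ being Lipschitz with $\sigma_i(0) = 0$), yields local Lipschitz regularity of $u_i$ and hence $\underline p_i > -\infty$. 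The bound $\underline p_i \leq \underline q_i$ is immediate from $u(x)-u(O) \leq v(x)-v(O)$ divided by $x_i > 0$ and passage to $\liminf$. For (iii), given $p \in (\underline p_i, \bar p_i)$, I would exploit $p < \bar p_i$ to select $x_n \to 0^+$ with $u_i(x_n) > u(O) + p x_n + \delta x_n$ for some $\delta > 0$, and introduce a quadratic test function $\varphi_n(x) := p x + L_n(x - x_n)^2$ with carefully chosen scale $L_n$; the function $u_i - \varphi_n$ then has an interior maximum at some $\bar x_n \to 0$ with $u(\bar x_n) \to u(O)$, at which the subsolution inequality reads
$$\lambda u(\bar x_n) - 2 L_n a_i(\bar x_n) + H_i\!\left(\bar x_n,\, p + 2 L_n(\bar x_n - x_n)\right) \leq 0,$$
and passing to the limit would yield $\lambda u(O) + H_i(O, p) \leq 0$. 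The endpoints of $[\underline p_i, \bar p_i]$ are then recovered by continuity of $H_i$, and the supersolution case is symmetric, the open interval $(\underline q_i, \bar q_i)$ in the statement reflecting the fact that strict inequalities on both sides of $q$ are needed to produce an interior minimum.

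The hardest part will be the calibration of $L_n$ in the last step: one needs simultaneously the slope perturbation $2 L_n(\bar x_n - x_n)$ and the elliptic contribution $L_n a_i(\bar x_n)$ to vanish as $n \to \infty$. The quadratic bound $a_i(\bar x_n) \leq C \bar x_n^2$, available precisely because $i \in \mathcal{D}$, is what reconciles these two requirements; without it (as would be the case on a purely nondegenerate branch) the scheme would break down, which is exactly why the separate, Hopf-based argument is required for (iv).
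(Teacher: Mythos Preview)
Your outline has a genuine gap in part~(iv). The claim that $u_i,v_i\in C^{1,\alpha}_{\rm loc}((0,\delta))$ follows from interior viscosity regularity on a uniformly elliptic branch is false: $C^{1,\alpha}$ estimates hold for viscosity \emph{solutions}, not for sub- or supersolutions separately. A subsolution of a uniformly elliptic equation in 1D is only semiconvex (for instance $u(x)=|x-\tfrac12|$ is a viscosity subsolution of $-u''=0$ on $(0,1)$ but is not $C^1$), and the Hopf/boundary argument applied to $w=v-u+M$ at best controls the one-sided derivative of $w$, not of $u_i$ and $v_i$ individually. So nothing in your scheme forces $\underline p_i=\bar p_i$.

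The paper's argument for (iv) is entirely different and elementary: assuming $\underline p_i<\bar p_i$, for any $p\in(\underline p_i,\bar p_i)$ and any $K>0$ one can produce interior maxima $x_k\in(0,b_k)$, $b_k\to 0$, of $x\mapsto u_i(x)-px+Kx^2$ (this is the Lions--Souganidis trick, Lemma~\ref{lem-max123}(ii) with $X=-2K$). The subsolution inequality at $x_k$ and the limit $k\to\infty$ give $\lambda u(O)+2Ka_i(O)+H_i(O,p)\leq 0$, which is impossible for $K$ large since $a_i(O)>0$. The same device with $X=0$ handles (iii), so your quadratic test function $\varphi_n=px+L_n(x-x_n)^2$ and the delicate calibration of $L_n$ are unnecessary: a linear test function makes the second-order term vanish identically.

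A second, smaller issue: your proof never invokes the Kirchhoff hypothesis~\eqref{hyp-kirch-gene}(ii), yet without it one cannot rule out $\bar p_i=-\infty$ (equivalently, a jump $u_i(0)>\limsup_{x\to 0^+}u_i(x)$). The paper uses~\eqref{hyp-kirch-gene}(ii) explicitly for this: if $\bar p_i=-\infty$, any slope $p$ on edge $i$ together with large slopes on the other edges puts $O$ at a local maximum, Proposition~\ref{reform-junct} then forces the Kirchhoff inequality $F(u(O),(L,\dots,p,\dots,L))\leq 0$, and sending $p\to-\infty$ contradicts~\eqref{hyp-kirch-gene}(ii). Your doubling-of-variables route to ``local Lipschitz regularity of $u_i$'' cannot close at the endpoint $0$ without a substitute for this step.
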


\begin{rema} \label{loss-bd}
Lemma~\ref{inegGi} is a second-order analogue to Lions-Souganidis arguments~\cite{ls16,ls17} (see also~\cite[Lemma 15.1, p.258]{bc24}).
Note that, for a subsolution $u$, both $\underline p_i$ and $\bar p_i$ are finite.
For a supersolution $v$, we know that $-\infty <\underline q_i$ when $\max_{\Gamma} (u-v)= (u-v)(O)$ (i.e., when
we can ``touch'' $v$ from below at $O$)
but $\underline q_i$ and/or $\bar q_i$ may be $+\infty$.
\end{rema}  


\begin{proof}
To prove (i), we first claim that, for $L,K>0$ large enough, then
\begin{eqnarray}\label{claimLK}
  && u(x) \leq u(O) + \psi(x) \quad \text{for $x$ close to $O$,}
\end{eqnarray}
where $\psi$ is the function introduced in Lemma~\ref{fct-test-quad} with $y = O$, that is $\psi(x) = L(\rho(x,O) - K \rho(x, O)^2)$. We consider
\begin{eqnarray*}
  \max_{ \{\rho(\cdot,O)\leq (4K)^{-1}\}} \{ u-\psi \}.
\end{eqnarray*}

From Lemma~\ref{fct-test-quad}, for $L$ large enough depending on $K$ and $\osc (u)$, the maximum is achieved at some $\bar x$ such that $\rho(\cdot,O)< (4K)^{-1}$.
If $\bar x=O$, then \eqref{claimLK} is proved.
Otherwise, up to increase $K$ if needed, $\bar x\not\in  \VVb$, thus
$\bar x\in E_i$ for some $1\leq i\leq N$. We
we can write the viscosity inequality of Definition~\ref{defi-visco} inside $E_i$,
which reads
\begin{eqnarray*}
0 &\geq &  G_{i}\big(u(\bar x), \psi_{x_i}(\bar{x}), \psi_{x_i x_i}(\bar{x}) ,\bar{x}\big)\\
&=& \lambda u(\bar x)+ 2 a_i(\bar{x}_i) KL + H_i(\bar{x}, L(1-2K\bar{x}_i)).
\end{eqnarray*}
Now there are two cases: 

\smallskip

\noindent
-- If $i \in \mathcal{D}$, using that $2 a_i(\bar{x}_i) KL\geq 0$, we have
$$\lambda u(\bar x)+ H_i(\bar{x}, L(1-2K\bar{x}_i))\leq 0,$$
and, by choosing $L$ large enough, we have a contradiction since $L(1-2K\bar{x}_i)\geq L/2$
(see~\eqref{gradpsi}) and $H_i$ is coercive.

\smallskip

\noindent
-- If $i \notin \mathcal{D}$, then, using that $H_i (x,p)$ is Lipschitz continuous (see~\eqref{Ham}), we obtain
$$ \lambda u(\bar x)+ 2 a_i(\bar{x}_i) KL + H_i(\bar{x},0)-C_H L(1-2K\bar{x}_i)) \leq 0.$$
Thanks to the continuity of $a_i$ and the fact that $i\not\in \mathcal{D}$, we first choose
$K$ large enough in order that $\min_{x\in V_K \cap E_i} a_i(x) >0$,
where $V_K= \{\rho(\cdot,O)\leq (4K)^{-1}\}$.
Secondly, enlarging $K$ if needed in such a way
that $$2K\min_{i \notin \mathcal{D}}\min_{x\in V_K\cap E_i} a_i(x) > C_H,$$ and then taking
$L$ large enough, we also have a contradiction and the claim~\eqref{claimLK} is proved.

A straightforward consequence of the claim is that $\bar p_i <+\infty$ for all $i$ and
therefore $D^+ u_i(O)\neq \emptyset$.

Next we prove that $\bar p_i >-\infty$ for all $i$. If not, there exists $i$
such that $\bar p_i=-\infty$ and we introduce the function $\chi$ defined in a neighborhood of $O$ by
$$ \chi(x) :=
\begin{cases}
\psi(x) & \hbox{if  $x\in E_j$ with $j\neq i$,} \\
px & \hbox{if  $x\in E_i$,}
\end{cases}
$$
where $\psi$ is defined at the beginning of the proof, and $p\in \R$.

We claim that, for $L,K>0$ large enough and for any $p\in \R$, then $O$ is a local maximum point of $u-\chi$. Indeed, on one hand,
for $L,K>0$ large enough,
we have $u(x) - \chi(x) \leq u(O)-\chi(O)$  for any $x\in E_j$, $j\neq i$, close enough to $O$
since~\eqref{claimLK} holds by the above;
on the other hand, since $\bar p_i=-\infty$, then $u(x)-px \leq u(0)$ for $x \in E_i$, close enough to $O$.

Moreover, by coercivity of the Hamiltonians, we can choose $L$ large enough in order to have $\lambda u(O)+ H_j(0,L)>0$ for $j \in \mathcal{D}$, $j\neq i$, and, if $i \in
\mathcal{D}$, then we can also assume that $\lambda u(O)+ H_i(0,p)>0$ by choosing $p<0$ with $|p|$ large enough.

With these choices, thanks to Proposition~\ref{reform-junct}, the subsolution inequality obviously reduces to the nonlinear Kirchhoff
condition
\begin{eqnarray*}\label{kirch-contr763}
F(u(O), \mathbf{\chib_{x}(O)}) = F(u(O), (L,\cdots,L,p,L,\cdots,L)) \leq 0,
\end{eqnarray*}
where the $p$ is at the $i^{th}$-position. But using Assumption~\eqref{hyp-kirch-gene}-(ii) and taking $p<0$ with $|p|$ large enough, this inequality cannot hold and we reach a contradiction.

%

The property $D^+ u_i(O)=[\bar p_i, +\infty)$ follows from~\cite[Proposition~2.10, p.~84]{bc24}.
  
Now we prove, at the same time, Property~\eqref{ineg-sous-deg} and $\underline p_i >-\infty$
for $i\in\mathcal{D}$ (we will show later that this property of $\underline p_i$ also holds if $i\not\in\mathcal{D}$).
The argument is exactly the same as in Lions and Souganidis~\cite{ls16,ls17} (see also~\cite[Lemma 15.1, p.258]{bc24}):
if $\underline p_i < \bar p_i$,
it consists in looking at $\max_{x_i \in [0,b_k]} \{ u_i (x_i)-px_i \}$ for
$p\in (\underline p_i , \bar p_i)$ and
a suitable choice of a sequence $(b_k)_k$ converging to $0$,
see Lemma~\ref{lem-max123} (ii) with $X=0$.
The fact that the second derivative of the function $x_i \mapsto px_i$ is $0$
takes care of the $a_i$-term. More precisely, writing the viscosity inequality
for the subsolution at the maximum point $x_k\in (0,b_k)$ given by Lemma~\ref{lem-max123} (ii)
and letting $k\to +\infty$, we get
\begin{eqnarray*}
\lambda u(O)+ H_i(O, p) \leq 0.
\end{eqnarray*}
This proves~\eqref{ineg-sous-deg} for $p\in (\underline p_i , \bar p_i)$
and
the coercivity of $H_i$ implies that all these $p\in (\underline p_i , \bar p_i)$ are uniformly
bounded and therefore necessarily $\underline p_i >-\infty$. And, of course, Inequality~\eqref{ineg-sous-deg} also holds for $p=\underline p_i$
and $p= \bar p_i$ by the continuity of $H_i$.

If $\underline p_i = \bar p_i$, then we consider $u^\e (x_i):= u_i (x_i)+\e x_i\sin(\log(x_i))$.
Defining $\underline p_i^\e,\bar p_i^\e$ in the same way as $\underline p_i,\bar p_i$ above
changing $u_i $ in $u^\e$, we have $\underline p_i^\e < \bar p_i^\e$. 
Then, we can apply Lemma~\ref{lem-max123} (ii) with $w=u^\e$, $p\in (\underline p_i^\e, \bar p_i^\e)$ and $X=0$
to obtain that the function
$$
x_i\mapsto u_i^\e (x_i)- u_i^\e(0)-px_i=u_i(x_i)-u_i(0) - px_i +\e x_i\sin(\log(x_i))
$$
has a sequence of maximum points $0 < x_k \to 0$. 
Writing the viscosity inequality for the subsolution $u$ at $x_k$ inside the edge $E_i$,
 we arrive at
\begin{eqnarray*}
&& \lambda u_i(x_k) + H_i(x_k,p) \leq C \frac{a_i(x_k)}{x_k} + O(\epsilon).
\end{eqnarray*}
Since $a_i(O)=0$, from~\eqref{hyp-diff}, we get $a_i(x_k)=\sigma_i^2(x_k)\leq C_a|x_k|^2$.
Therefore, passing to the limit $x_k\to 0$ and then $\epsilon\to 0$, we conclude~\eqref{ineg-sous-deg}
for $p=\underline p_i = \bar p_i$.

For (ii) in the case of the supersolution, since $O$ is maximum point of $u-v$, we have $-\infty < \underline p_i \leq \underline q_i$. And since $q \in D^-v_i(O)$ for any $q \leq \underline q_i$, we have either $\underline q_i <+\infty$ and $D^-v_i(O)=(-\infty,\underline q_i]$, or $D^-v_i(O)=\R$. In the former case, the Lions-Souganidis argument gives as above the expected inequality~\eqref{ineg-sur-deg}.

We provide the proof of (iv) only in the subsolution case, the supersolution one can be treated analogously.
Note that an immediate by-product is that $\underline p_i >-\infty$ for $i\not\in\mathcal{D}$ since we already know that $\bar p_i >-\infty$.

If $\underline p_i < \bar p_i $, then 
we use the Lions-Souganidis argument with a small modification: instead of looking at
$\max_{x_i \in [0,b_k]} \{ u_i (x_i)-px_i\}$ for $p\in (\underline p_i ,\bar p_i)$
and a suitable choice of a sequence $(b_k)_k$ converging to $0$,
we look at $\max_{x_i \in [0,b_k]} \{u_i (x_i)-px_i+Kx_i^2\}$.
Writing the subsolution inequality at the maximum point $x_k\in (0,b_k)$ given by Lemma~\ref{lem-max123} (ii) with $X=-2K$
and letting $k\to +\infty$, we arrive at
$$ 
\lambda u(O)+2Ka_i(O)+ H_i(O, p)\leq 0.
$$
This cannot hold if $K$ is chosen large enough since $a_i(O)>0$. The proof of Lemma~\ref{inegGi} is complete.
\end{proof}


We are now in position to prove the comparison result.

\begin{proof}[Continuation of the proof of Theorem~\ref{teo:comp}]
As explained above, we argue by contradiction assuming that $O$ is
the unique positive maximum of $u-v$, that is
\begin{equation}\label{contraO}
0 < M :=  (u - v)(O) > (u-v)(x), \quad x\in\Gamma\setminus \{O\}.
\end{equation}

From Lemma~\ref{inegGi} (i),(ii), we know that, for all $1\leq i\leq N$,
$\underline p_i, \bar p_i$ are finite and $\underline p_i \leq \underline q_i$.
And by Lemma~\ref{inegGi} (iv), if $i\notin \mathcal{D}$, then $\underline p_i=\bar p_i \leq \underline q_i=\bar q_i$.

\smallskip

We distinguish two cases.
\smallskip

\noindent
\textsl{Case 1. There exists $ i\in \mathcal{D}$ such that $\underline p_i \leq\underline q_i \leq \bar p_i$.}
\smallskip

\noindent
From Lemma~\ref{inegGi} (iii), we have, for all $p \in [\underline p_i, \bar p_i]$
$$ \lambda u(O)+ H_i(O, p) \leq 0,$$
and, for any $q \in (\underline q_i, \bar q_i )$, we have
$$ \lambda v(O)+ H_i(O, q) \geq 0.$$
By the continuity of $H_i$, this latter inequality is valid for $q=\underline q_i$ while the former is also valid for $p=\underline q_i$.
Subtracting these inequalities, we obtain a contradiction to \eqref{contraO}.

\smallskip

\noindent
\textit{Case 2. For all $i \in \mathcal{D}$, $-\infty <\underline p_i\leq \bar p_i <\underline q_i \leq \bar q_i \leq +\infty$.}
\smallskip

\noindent
For all $i \in \mathcal{D}$, we consider the functions
\begin{eqnarray*}
p\in\R \mapsto U_i(p):= \lambda u(O)+ H_i(O, p) -\frac{\lambda M}{2},
\end{eqnarray*}
recalling that $M>0$ is defined in~\eqref{contraO}.

On the one hand, from Lemma~\ref{inegGi} (iii), we have $U_i(\bar p_i)< 0$ since $M>0$.
On the other hand, either $\underline q_i <+\infty$ and, again by Lemma~\ref{inegGi} (iii) for supersolutions we have
$$
U_i(\underline q_i) = \lambda v(O) + H_i(O, \underline q_i) + \frac{\lambda M}{2} > 0, 
$$
using the continuity of $H_i$; or $\underline q_i =+\infty$ and, by the coercivity of $H_i$, there exists $\tilde q_i$ large enough such that $U_i(\tilde q_i)>0$.

From these two properties, it follows that there exists $s_i\in (\bar p_i,\underline q_i)$ such that
$U_i (s_i)=0$ and we have
\begin{eqnarray}
  &&  \lambda u(O)+ H_i(O, s_i) = U_i(s_i) +\frac{\lambda M}{2} >0, \nonumber\\
  &&  \lambda v(O)+ H_i(O, s_i) = U_i (s_i)-\frac{\lambda M}{2}< 0.\label{Giv-strict-neg} 
\end{eqnarray}
By the continuity of $H_i$, we can choose $r_i <s_i$, close enough to $s_i$ in order to have 
\begin{equation}\label{Giu-strict-pos}
\lambda u(O)+ H_i(O, r_i)>0.
\end{equation}

Moreover, since $\bar p_i < r_i$ and $s_i < \underline q_i$, we infer that,
for all $i\in  \mathcal{D}$,
\begin{eqnarray}\label{jets-D}
&& r_i \in D^+ u_i(0), \quad s_i \in D^- v_i(0).
\end{eqnarray}

Now, we deal with the indices $i\not\in\mathcal{D}$. We claim that we have $\bar p_i <\underline q_i$.
Indeed, since $\underline p_i=\bar p_i$ and $\underline q_i =\bar q_i $ and since these quantities are finite, $u_i$ and $v_i$ are differentiable at $0$ 
and since $O$ is a strict maximum point of $w=u_i-v_i$, we can use Hopf Lemma, i.e., Lemma~\ref{lem:hopf} in the appendix, which proves the claim.
As a consequence, we can choose as above $\bar p_i < r_i <s_i < \underline q_i$ and \eqref{jets-D} holds.

Finally, gathering all these informations and setting ${\bf r}=(r_1,\cdots, r_N)$ and ${\bf s}=(s_1,\cdots, s_N)$,
we obtain
\begin{eqnarray}\label{diff1}
&& {\bf r} \in D^+ u(O), \quad {\bf s} \in D^- v(O).
\end{eqnarray}
It follows from Proposition~\ref{junct-subdiff} together
with~\eqref{Giu-strict-pos} and~\eqref{Giv-strict-neg}
that necessarily the nonlinear Kirchhoff condition
is activated both for the subsolution $u$ and the supersolution $v$ in~\eqref{eq683bis}
and~\eqref{eq684bis} respectively.
Therefore
\begin{eqnarray*}
&& F(u(O),\mathbf{r})  \leq 0 \leq F(v(O),\mathbf{s}),  
\end{eqnarray*}
which leads to a contradiction by \eqref{hyp-kirch-gene}-(i) since $u(O)\geq v(O)$, $r_i < s_i$ for all $1\leq i \leq N$.
This ends the proof.
\end{proof}

\begin{rema} In the above proof, Hopf Lemma is used in order to have $\bar p_i <\underline q_i$ even if $i \notin \mathcal{D}$.
This is a way to avoid considering different cases, namely whether $\mathcal{D}$ is empty or not, to use \eqref{hyp-kirch-gene}-(i) to obtain the conclusion.
If we strenghten this hypothesis by assuming that, if $r\geq s$ and $p_i\leq q_i$ for all $i$, then
\begin{equation}\label{Strong-hyp-kirch}
F(r,{\bf p})\geq F(s,{\bf q})+ \omega(\min_i(q_i-p_i)),
\end{equation}
for some continuous, increasing function $\omega:[0,+\infty)\to [0,+\infty)$ such that $\omega(0)=0$, then we can argue in a different way.
Indeed we can change the subsolution $u$ in $\hat u(x):=u(x)+\delta x$ for some small $\delta >0$.
This new function is a subsolution of a perturbed equation (within a $O(\delta)$ term independent of $u$, thanks to assumption~\eqref{Ham}-(ii)),
which satisfies the nonlinear Kirchhoff condition in a strict way, i.e., we have
$$ F(\hat u(O), \mathbf{\hat u_x}(O)) \leq -\omega(\delta)<0.$$
This strict subsolution feature allows to conclude in the following way. First, we can assume w.l.o.g. that $\hat u-v$ has still a maximum at $O$, 
otherwise classical arguments apply. Then, the proof can be done as above: in Case~2, we can choose, for all $1\leq i\leq N$, $r_i=s_i$ with $s_i$
satisfying \eqref{Giv-strict-neg} and $\bar p_i < s_i < \underline q_i$. It follows that~\eqref{diff1} holds. Because \eqref{Giv-strict-neg} holds,
the viscosity inequalities---both for the subsolution and the supersolution---reduce to the nonlinear Kirchhoff condition thanks to
Proposition~\ref{junct-subdiff}. Hence we have
\begin{eqnarray*}
&&  F(\hat u(O),\mathbf{s}) \leq -\omega(\delta) \quad\hbox{and}\quad 
F(v(O),\mathbf{s}) \geq 0,
\end{eqnarray*}
which leads to a contradiction since $\hat u(O)=u(O)\geq v(O)$.
\end{rema}

\section{Well-posedness, regularity and and various extensions}\label{vre}

\subsection{Existence for the Kirchhoff-type problem on a junction.}
\label{sec:exis-junct}

The aim of this section is to provide an {\em existence result} for Problem~\eqref{pde-junct}. To do so,
we are going to use the classical Perron's method (cf. Ishii~\cite{ishii87}, see also \cite{cil92}) which requires a stronger assumption than 
\eqref{hyp-kirch-gene}. 

\begin{teo}\label{exist} Under the assumptions of Theorem~\ref{teo:comp} and, if $F$ satisfies in addition: 
there exists $1\leq i_0 \leq N$ such that, for all $r\in \R$ and $\{p_j\}_{j\not= i_0}$, 
\begin{equation}\label{propi0}
\lim_{p_{i_0}\to +\infty} F(r,p_1, \cdots , p_{i_0}, \cdots, p_N)=-\infty,
\end{equation}
then there exists a unique, bounded continuous solution to Problem~\eqref{pde-junct}.
\end{teo}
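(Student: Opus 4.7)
The plan is to apply the classical Perron's method, whose key input is the strong comparison principle already established in Theorem~\ref{teo:comp}. Uniqueness among bounded continuous solutions is immediate from that theorem. The only nontrivial step is the construction of a bounded viscosity sub- and supersolution pair (``barriers''), after which the supremum over all subsolutions dominated by the supersolution yields the desired solution. I expect the barrier construction at the junction to be the main obstacle, since the inequalities $F \leq 0$ (resp.\ $F\geq 0$) at $O$ need to be verified using only the qualitative monotonicity properties of $F$.

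For the barriers, I would try the affine-in-geodesic-distance ansatz
\[
u_-(x) := -C + L\rho(x,O), \qquad u_+(x) := C - L\rho(x,O),
\]
which are continuous on $\Gamma$, smooth in each open edge with derivatives $\pm L$ and vanishing second derivative. On each edge $E_i$ the inequalities $\lambda u_\pm + H_i(x,\pm L) \lessgtr 0$ hold if $C$ is chosen large compared to $L$ and the data; and at each boundary vertex $\ver_i$ the inequalities $u_-(\ver_i) \leq h_i \leq u_+(\ver_i)$ hold for $C$ large, so the viscosity Dirichlet condition is satisfied. The delicate check is at $O$: if $\varphi \in C^2(\Gamma)$ is any test function with $u_- - \varphi$ having a local max at $O$, then $\varphi_{x_i}(O) \geq L$ for every $i$, and by the monotonicity \eqref{hyp-kirch-gene}(i) of $F$ in each $p_i$,
\[
F(u_-(O), \varphib_x(O)) \,\leq\, F(-C, L\1) \,\leq\, F\bigl(-C, 0,\dots,0, L, 0, \dots, 0\bigr),
\]
where in the last argument $L$ sits in position $i_0$; by \eqref{propi0}, this last quantity tends to $-\infty$ as $L \to \infty$. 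Hence for $L$ large the Kirchhoff term in the $\min$ at $O$ is nonpositive, so $u_-$ is a global viscosity subsolution. Dually, for $u_+$ one has $\varphi_{x_i}(O) \leq -L$, whence $F(u_+(O), \varphib_x(O)) \geq F(C, -L\1) \to +\infty$ as $L \to \infty$ by \eqref{hyp-kirch-gene}(ii), giving the supersolution property.

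With the barriers in hand, I would define
\[
u(x) := \sup\bigl\{ w(x) : u_- \leq w \leq u_+, \ w \ \text{viscosity subsolution of \eqref{pde-junct}}\bigr\},
\]
which is nonempty since $u_- \in \mathcal{S}$. The standard Perron argument (cf.\ Ishii~\cite{ishii87}, \cite{cil92}) shows that $u^*$ is a subsolution; the supersolution property of $u_*$ is established by the usual contradiction: if it failed at some $x_0$, one would locally bump $u$ up using a strict subsolution test function, contradicting the definition of the supremum. At interior edge points this bump argument is classical; at $O$ and at boundary vertices one uses test functions in $C^2(\Gamma)$ exactly as in \cite{ls17, bc24} for the network setting. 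Finally, Theorem~\ref{teo:comp} applied to the sub/supersolution pair $(u^*, u_*)$ gives $\widetilde{u^*} \leq u_*$ on $\Gamma$, so $u^* = u_*$ on $\Gamma \setminus \VVb$, producing a continuous viscosity solution there; at each boundary vertex $\ver_i$ one either uses the redefinition $\tilde u$ of Proposition~\ref{PropDir} (when $a_i(\ver_i)=0$) or the classical identity $u(\ver_i)=h_i$ (when $a_i(\ver_i)>0$) to extend continuously to all of $\Gamma$. Uniqueness of the bounded continuous solution follows again from Theorem~\ref{teo:comp}.
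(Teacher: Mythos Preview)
Your proposal is correct and follows essentially the same route as the paper: both construct affine-in-geodesic-distance barriers $u_\pm(x) = \pm(C - L\rho(x,O))$, verify the Kirchhoff inequality at $O$ via \eqref{hyp-kirch-gene}(ii) for the supersolution and via the extra hypothesis \eqref{propi0} for the subsolution (after freezing the non-$i_0$ slots by monotonicity), then choose $C$ large for the edge equations and Dirichlet data, and finally apply Perron's method together with Theorem~\ref{teo:comp} to obtain continuity. Your sketch actually spells out the Perron bump argument and the continuous extension at $\VVb$ in slightly more detail than the paper does.
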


\begin{proof}
In order to apply the classical Perron's method, the key point is to build sub and supersolutions of the problem. We are going to do it in the following way: we are looking for a supersolution $u^+$ and a subsolution $u^-$ of the form 
$$u^+(x)= K_1-K_2 x \quad \hbox{and}\quad u^-(x)= -K_1+K_2 x,$$
for some constants $K_1,K_2>0$.

In the sequel, we denote by $\1_{\mathbf{\{i\}}}=(0,\cdots, 1, \cdots,0)$ where the $1$ is at the $i^{th}$ coordinates. For $u^+$, using \eqref{hyp-kirch-gene},
we have for $K_2>0$ large enough
$$ F(u^+(O), \mathbf{u_x^+(O)})= F(K_1,-K_2 \1)\geq F(0,-K_2\1_{\mathbf{\{1\}}} )\geq 0.$$
Then we choose $K_1$ large enough in order that we have both
$$
 \lambda u^+(x) - a_i(x)u_{x_i x_i}^+(x) + H_i(x, u_{x_i}^+(x))\geq 0 \quad \hbox{in }E_i
$$ 
(this is possible since $\lambda >0$, $u_{x_i x_i}^+(x)=0$ and $H_i(x, u_{x_i}^+(x))=H_i(x,-K_2)$ is bounded once $K_2$ is fixed) and
$$ u^+(\ver_i) \geq h_i\quad \hbox{for all }i \in \VVb.$$
And similar arguments show that $u^-$ is a subsolution for $K_2$ large enough and then $K_1$ large enough compared to $K_2$
but this time we have to use $\1_{\mathbf{\{i_0\}}}$ instead of $\1_{\mathbf{\{1\}}}$ and Assumption~\eqref{propi0} .

Finally the inequalities $u^-(\ver_i) \leq h_i \leq u^+(\ver_i)$ for all $i\in \VVb$ and $u^-(O) \leq u^+(O)$ also imply that $u^- \leq u^+$ on $\Gamma$
since both $u^- $ and $u^+$ are affine on each $E_i$.

With all these properties, Perron's method applies readily with junction and boundary conditions being understood in the viscosity sense. 
Therefore there exists a discontinuous solution $u$ of Problem~\eqref{pde-junct}. But a straightforward use of Theorem~\ref{teo:comp} implies that $u$ is continuous in $\Gamma\setminus \VVb$ and can be extended as a continuous function on $\Gamma$.
\end{proof}

\subsection{Regularity.}
\label{sec:reg}
The one dimensional structure of the problem allows us to get the following

\begin{prop}\label{prop-reg}
Assume~\eqref{steady} hold, and additionally we assume that for each $1 \leq i \leq N$, either~\eqref{Hcoercive} holds,
or there exists $\underline a_i > 0$ such that $a_i \geq \underline a_i$ in $\bar E_i$. 
Then, all bounded subsolution $u$ to~\eqref{pde-junct} is Lipschitz continuous on the subset 
$\Gamma^\delta := \{ x \in \Gamma : \rho(x, \ver_i) > \delta, \ 1 \leq i \leq N \}$, $\delta >0$,
with a Lipschitz constant which depends on $\delta$ and $\| u\|_\infty$.
%
\end{prop}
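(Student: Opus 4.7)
The strategy is to establish, for every $y\in \Gamma^\delta$, a local one-sided bound $u(x)-u(y)\leq L\rho(x,y)$ on a $\rho$-neighborhood of $y$ of radius independent of $y$, with $L$ depending only on $\delta$, $\|u\|_\infty$ and the data. Exchanging the roles of $x$ and $y$ then yields the two-sided Lipschitz estimate $|u(x)-u(y)|\leq L\rho(x,y)$, and compactness of $\Gamma^\delta$ provides a global Lipschitz constant.

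Fix $y\in\Gamma^\delta$. Following the barrier argument of Lemma~\ref{inegGi}\,(i), introduce the quadratic test function $\psi(x):= L\rho(x,y)-K\rho(x,y)^2$ from Lemma~\ref{fct-test-quad} and consider
\begin{equation*}
\max_{\{\rho(\cdot,y)\leq (4K)^{-1}\}} \{ u(x)-u(y)-\psi(x) \}.
\end{equation*}
For $L$ sufficiently large depending on $K$ and $\|u\|_\infty$, Lemma~\ref{fct-test-quad} places the maximizer $\bar x$ strictly inside this set; choosing $K$ large enough so that $(4K)^{-1}<\delta$ also excludes boundary vertices. If $\bar x=y$, then $u-u(y)\leq \psi\leq L\rho(\cdot,y)$, which is the sought bound. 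Otherwise $\bar x$ lies in some edge $E_i$ (the junction case $\bar x=O$ is handled in the same spirit as Lemma~\ref{inegGi}, using the first-order reduction at $O$ and assumption \eqref{hyp-kirch-gene}\,(ii) to rule out Kirchhoff obstructions), and the subsolution inequality reads
\begin{equation*}
\lambda u(\bar x) + 2K a_i(\bar x) + H_i\bigl(\bar x, \pm(L-2K\rho(\bar x,y))\bigr)\leq 0,
\end{equation*}
which we aim to contradict. If $i\in\mathcal{D}$, the coercivity \eqref{Hcoercive} combined with $|L-2K\rho(\bar x,y)|\geq L/2$ forces $H_i\geq L/(2C_H)-C_H$, yielding the contradiction for $L$ large. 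If $i\notin\mathcal{D}$, then $a_i(\bar x)\geq \underline a_i>0$, so the term $2K\underline a_i$ contributes positively; combined with the Lipschitz estimate \eqref{Ham}\,(ii), choosing $K$ large relative to $L$ and the data produces the contradiction.

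The main obstacle is reconciling these parameter requirements, since Lemma~\ref{fct-test-quad} forces $L$ to be chosen large relative to $K$, while the uniformly elliptic case requires $K$ large relative to $L$. A natural way around this is to treat the two edge types with different tools: for a degenerate edge one applies the quadratic barrier with $L\gg K$, exploiting the coercivity available there thanks to \eqref{steady}\,(iii); for a uniformly elliptic edge one invokes instead the classical one-dimensional interior regularity for uniformly elliptic viscosity inequalities (e.g.\ a Krylov--Safonov or Ishii--Lions doubling-of-variables argument), which produces the Lipschitz estimate away from $\ver_i$ on the edge. These edge-wise Lipschitz bounds are then glued at the junction $O$ by the test-function argument above, using that $\rho$ is globally defined and the barrier $\psi$ extends smoothly across edges. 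The assumption \eqref{steady}\,(iii) plays the essential role of making the quadratic barrier close the argument on precisely the edges where classical elliptic regularity is unavailable.
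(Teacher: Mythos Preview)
Your overall strategy is right, but the ``main obstacle'' you identify is an artifact of a miscomputation. With the barrier $\psi(x)=L(\rho(x,y)-K\rho(x,y)^2)$ of Lemma~\ref{fct-test-quad}, one has $\psi_{x_ix_i}=-2LK$, so the viscosity inequality at $\bar x\in E_j$ reads
\[
\lambda u(\bar x)+2LK\,a_j(\bar x)+H_j\bigl(\bar x,\psi_{x_j}(\bar x)\bigr)\le 0,
\]
with $|\psi_{x_j}(\bar x)|\le L$. On a uniformly elliptic edge this gives, via~\eqref{Ham}(ii),
\[
2\underline a_j\,KL\le C_H L+|H_j(\bar x,0)|-\lambda u(\bar x),
\]
so one first fixes $K$ large enough that $2\underline a_j K>C_H$ (independently of $L$), and \emph{then} takes $L$ large. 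There is no circular dependency: $K$ is chosen once from the elliptic lower bounds, and afterwards $L$ is taken large enough for both Lemma~\ref{fct-test-quad} and the coercive edges. Your detour through Krylov--Safonov or Ishii--Lions estimates, together with an unspecified ``gluing'' at the junction, is therefore unnecessary and in its present form not a proof.

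The case $\bar x=O$ also needs more than a reference to Lemma~\ref{inegGi}. With the plain barrier $\psi$, one has $\psi_{x_j}(O)=L$ for $j\neq i$ while $\psi_{x_i}(O)\le -L/2$; since all entries blow up with $L$, assumption~\eqref{hyp-kirch-gene}(ii) (which fixes the other coordinates) cannot be invoked directly. The paper remedies this by introducing an extra parameter $R\ge 1$, multiplying the barrier by $R$ on the edge containing $y$ only: then $\varphi_{x_i}(O)\le -RL/2$ while $\varphi_{x_j}(O)=L$ stays fixed, and letting $R\to+\infty$ with $L,K$ already chosen forces $F(u(O),\boldsymbol{\varphi_x}(O))\to+\infty$, the desired contradiction. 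You should incorporate this decoupling device.
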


\begin{proof}
  For any $y \in \Gamma^\delta$, and $L, K > 0$ to be fixed, we start denoting $\psi_y(x) = L(\rho(x,y) - K\rho(x,y)^2)$, see Lemma~\ref{fct-test-quad}. Assuming $y \in \bar E_i$ for some $i$, and for $R \geq 1$ to be fixed too, we consider the function
$\varphi \in C^1(\Gamma)$ defined by
$$
\varphi(x) = \left \{ \begin{array}{ll} R \psi_y(x) \quad & \mbox{if} \ x \in \bar E_i, \\
R \psi_y(O) +  \psi_O(x)  \quad & \mbox{if} \ x \in E_j,\, j \neq i. \end{array} \right.
$$

Of course, $\varphi$ depends on all the parameters introduced above, but we omit its dependence for simplicity. Notice that when $R = 1$, then $\varphi = \psi$ in Lemma~\ref{fct-test-quad}. This extra parameter allows us to deal with the different natures of the edges and the Kirchhoff condition at once, providing an unified proof.

Consider
$$
\max_{x \in \Gamma \cap \{ \rho(\cdot, y) \leq (4K)^{-1}\}} \{ u(x) - u(y) - \varphi (x)\}.
$$

If we can prove that the maximum is nonnegative for some $L,K$ depending
only on $\delta$ and $\| u\|_\infty$  but not on $y \in \Gamma_\delta$,
then $u$ is Lipschitz continuous with constant $RL$
whenever $\rho(x,y) \leq (4K)^{-1}$,
since
$$
\varphi(x) \leq \left \{ \begin{array}{ll} RL \rho (x,y) \quad & \mbox{if} \ x \in \bar E_i \\
RL \rho (O,y)+L\rho(x,O)\leq RL\rho(x,y) \quad & \mbox{if} \ x \in E_j,\, j \neq i.\end{array} \right.
$$
The Lipschitz bound can then be easily extended to all $x, y \in \Gamma^\delta$. 

So, from now on, we argue by contradiction, assuming that the maximum is positive.
In particular, $\bar x\not= y$ and $\psi$ is differentiable at $\bar x$
by Lemma~\ref{fct-test-quad}-$(i)$.

Thanks again to Lemma~\ref{fct-test-quad} (with $w=u-u(y)$), if $L> 16\, \osc (u) K/3$, then
the above maximum is achieved at $\bar x\in \{ \rho(\cdot, y) < (4K)^{-1}\}$.
In particular, if we suppose in addition that $(4K)^{-1}<\delta$, then
$\rho(\bar x, y)<\delta$ and $\bar x\not\in  \VVb$.

We then distinguish two cases depending whether $\bar x\in E_j$ for some $j$, or $\bar x=O$.
Suppose first that  $\bar x\in E_j$ for some $j$.
We can write the viscosity inequality for $u$ at $\bar x$, that is
$$
\lambda u_j(\bar x) - a_j(\bar x) \varphi_{x_j x_j}(\bar x)+ H_j(\bar x, \varphi_{x_j}(\bar x)) \leq 0.
$$

At this point, we split the analysis depending on the type edge $E_j$ is.
If $H_j$ satisfies~\eqref{Hcoercive}, then we obtain
\begin{eqnarray}\label{choixL-298}
(2C_H)^{-1} L \leq C_H - \lambda u_j(\bar x),
\end{eqnarray}
where we have used~\eqref{gradpsi} and
$R \geq 1$ if $j=i$. Hence, we arrive at a contradiction by taking $L = 2C_H(C_H + \lambda \| u \|_\infty) + 1$.
On the other hand, if $a_j \geq \underline a_j > 0$, using~\eqref{Ham} we have
\begin{align*}
2\underline a_i KL & \leq  C_H L+ |H_i(O, 0)| + C_H\ell_i - \lambda u_i(\bar x) \quad \mbox{if} \ j = i, \\
2\underline a_j R KL & \leq  C_H R L+ |H_j(O, 0)| + C_H\ell_j - \lambda u_j(\bar x) \quad \mbox{if} \ j \neq i,
\end{align*}
from which we also arrive at a contradiction by taking $K\geq (\underline a_j)^{-1}(C_H+1)$
and $L \geq  \max_j  \{|H_j(O, 0)| + C_H\ell_j\}+ \lambda \| u \|_\infty + 1$.
No further restriction on $R$ is needed in this case. 



Hence, the only alternative remaining would be $\bar x = O$ (hence $y\not=O$). Since the maximum is achieved at
$\bar x = O$, we can use Proposition~\ref{reform-junct} for subsolutions.
For each $j \in \mathcal D$,
by the choice of $K, L$ in~\eqref{choixL-298} and since $R \geq 1$, we have 
$$
\lambda u(O) + H_j(O, \varphi_{x_j}(O)) > 0,
$$
from which we necessarily have the nonlinear Kirchhoff condition at $O$, 
\begin{eqnarray}\label{kirchF}
F(u(O), \mathbf{\varphib_{x}(O)} ) \leq 0.
\end{eqnarray}
But, from~\eqref{deriv-varphi}, we get
$$
\varphi_{x_i}(O)= RL(-1+2Ky_i) \quad \text{and} \quad \varphi_{x_j}(O)=L, \ j\not= i.
$$

Recalling that  $\bar x=O\in \{ \rho(\cdot, y) < (4K)^{-1}\}$, we infer
$\varphi_{x_i}(O)\leq -RL/2$.
Therefore, from~\eqref{hyp-kirch-gene}-(ii),
taking $R$ large enough (in terms of $F$ and $||u||_\infty$), we
reach a contradiction in~\eqref{kirchF}. This ends the proof.
\end{proof}

\begin{rema}
A standard bootstrap argument provides $C^{2, 1}$ estimates for \textsl{continuous viscosity solutions} of the problem, on $\bar E_i\cap \Gamma^\delta$, for edges $E_i$ satisfying the uniformly elliptic condition $a_i \geq \underline a_i > 0$ on $\bar E_i$.
\end{rema}

\subsection{Convergence of the vanishing viscosity method}
\label{sec:vanishing-visco}

The result is the
\begin{prop}Under the assumptions of Theorem~\ref{teo:comp}, if $\ue$ is a solution of
\begin{eqnarray*}
&& \left\{\begin{array}{ll}
\lambda \ue  - (\e+a_i(x)) \ue_{x x} + H_i(x,\ue_{x})=0, & x\in E_i, 1\leq i\leq N, \\[2mm] 
F(\ue(O), \ue_{x_1}(O), \cdots ,  \ue_{x_N}(O))=0,\\[2mm] 
\ue(\ver_i)= h_i, & 1\leq i\leq N, 
\end{array}\right.
\end{eqnarray*}
then $\ue \to u$ uniformly on each compact subset of $\Gamma\setminus \VVb$ as $\e\to 0$, where $u$ is the unique solution of Problem~\eqref{pde-junct}.
\end{prop}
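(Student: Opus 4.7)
The plan is to use the Barles--Perthame half-relaxed limits method together with the strong comparison principle Theorem~\ref{teo:comp}.

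\textbf{Uniform bound.} First I would produce $\e$-independent barriers. The affine functions $u^{\pm}$ of the form $\pm K_1\mp K_2 x_i$ on each edge $\bar E_i$ built in the proof of Theorem~\ref{exist} satisfy $(u^\pm)_{x_ix_i}\equiv 0$, so the extra term $\e (u^\pm)_{x_ix_i}$ vanishes identically and the same constants $K_1,K_2$ provide a sub- and a supersolution of the perturbed problem uniformly in $\e\in(0,1]$. Comparison (which holds for the perturbed problem by Theorem~\ref{teo:comp} applied with $a_i$ replaced by $a_i+\e$) then yields $\|u^\e\|_\infty\le K_1$ independently of $\e$. This legitimizes the half-relaxed limits
\[
\bar u(x)=\limsup_{\substack{\e\to 0\\ y\to x}} u^\e(y),\qquad \underline u(x)=\liminf_{\substack{\e\to 0\\ y\to x}} u^\e(y),
\]
which are respectively upper- and lower-semicontinuous on $\Gamma$, with $\underline u\le \bar u$.

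\textbf{Stability.} The second step is to show that $\bar u$ is a viscosity subsolution and $\underline u$ a viscosity supersolution of~\eqref{pde-junct}. Inside each open edge this is the classical stability argument, the term $\e\varphi_{x_ix_i}$ being locally bounded. At the junction $O$, let $\varphi\in C^2(\Gamma)$ with $O$ a strict local maximum of $\bar u-\varphi$. A standard localization produces $x_\e\to O$ with $u^\e-\varphi$ attaining a local maximum at $x_\e$ and $u^\e(x_\e)\to \bar u(O)$. If $x_\e\in E_{i(\e)}$ along a subsequence, the edge viscosity inequality for $u^\e$, after passing to the limit, gives one of the $G_i$ inequalities at $O$; if $x_\e=O$, the min-alternative in Definition~\ref{defi-visco} for $u^\e$ yields either a (perturbed) $G_i$-inequality or the $F$-inequality, both of which pass to the limit. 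The argument for $\underline u$ is symmetric, and the relaxed Dirichlet boundary condition at each $\ver_i\in\VVb$ is inherited from $u^\e(\ver_i)=h_i$ in the same standard fashion.

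\textbf{Conclusion.} Applying Theorem~\ref{teo:comp} to the subsolution $\bar u$ and supersolution $\underline u$ of~\eqref{pde-junct} gives $\widetilde{\bar u}\le \underline u$ on $\Gamma$, and in particular $\bar u\le \underline u$ on $\Gamma\setminus \VVb$. Combined with the trivial inequality $\underline u\le \bar u$ this forces $\bar u=\underline u=:u$ on $\Gamma\setminus \VVb$, where $u$ is the unique continuous solution of~\eqref{pde-junct} from Theorem~\ref{exist}. A standard property of half-relaxed limits then implies that $u^\e\to u$ uniformly on every compact subset of $\Gamma\setminus \VVb$.

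\textbf{Main obstacle.} The most delicate step is the stability at the junction $O$: one must check that the min/max alternative in Definition~\ref{defi-visco} is preserved in the limit in spite of the extra $\e\varphi_{x_ix_i}$ terms. This works because $\varphi\in C^2(\Gamma)$ makes these terms uniformly bounded and hence negligible, while the Kirchhoff nonlinearity $F(u^\e(O),\mathbf{u^\e_{x}(O)})$ is $\e$-independent and passes directly to the limit by continuity. A secondary nuisance is the possible boundary-layer loss of the Dirichlet condition at $\ver_i\in\VVb$ when $a_i(\ver_i)=0$, which is precisely why the uniform convergence is stated only on compact subsets of $\Gamma\setminus\VVb$ and is handled, as in Proposition~\ref{PropDir}, through the relaxed boundary condition together with $\widetilde{\bar u}$.
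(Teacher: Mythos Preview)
Your proposal is correct and is precisely the ``routine adaptation'' the paper defers to: half-relaxed limits combined with the strong comparison principle of Theorem~\ref{teo:comp}. The paper gives no proof of its own beyond pointing to \cite[Proposition~4.3 \& Theorem~4.4]{blt24}, and the Barles--Perthame scheme you outline is exactly what those results implement; your identification of the boundary-layer issue at $\VVb$ also matches the paper's own commentary on why the convergence is only stated away from $\VVb$.
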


We skip the proof since it is a routine adaptation of~\cite[Proposition 4.3 \& Theorem 4.4]{blt24}.
We just point out that the strange form of the convergence we have
for $\ue$ is due to the possible boundary layer we may have at each $\ver_i$.
Indeed, for $\ue$, the Dirichlet boundary condition is satisfied in
a classical sense, i.e., we really have $\ue(\ver_i)= h_i$, while we may have a loss of
boundary condition for $u$,  i.e., we may have $u(\ver_i)< h_i$, see Remark~\ref{loss-bd}.
Hence a uniform convergence on the whole $\Gamma$ is wrong in general.

\subsection{The network case}
\label{sec:network}
Now we consider the case of a finite general network $\Gamma$.
We will use the notations and definitions of~\cite[Section 7.1]{blt24}, that
we recall briefly for convenience.
A network $\Gamma$ is the union of a finite collection of vertices $\VV=\{ \ver_1 , \cdots , \ver_N\}\subset \R^d$
(labeled once for all) and of edges $\EE$.
Each edge  $E\in \EE$ is a line segment in $\R^d$ such that there exists exactly two vertices $\ver_i, \ver_j \in \VV$,
with $i<j$, $\bar E\cap \VV=\{\ver_i , \ver_j\}$, and
\begin{eqnarray}\label{edge345}
E = \{ \ver_i + t(\ver_j - \ver_i)/\ell_E : t \in J_E \},
\end{eqnarray}
where $\ell_E = |\ver_i - \ver_j|$ is the length of the edge and $J_E = (0, \ell_E)$.
The choice $i<j$ in~\eqref{edge345} induces a parametrization of the closure $\bar E$ in $\R^d$ of the edge $E$, that is
$$
\gamma_E: \bar J_E \to \R^N; \quad \gamma_E(t) = \ver_i + t(\ver_j - \ver_i)/\ell_E.
$$
We say that $E \in \EE$ is incident to  $\ver \in \VV$ if $\ver \in \bar E$, and in that case we write $E \in \mathrm{Inc}(\ver)$.
Note that each edge $E \in \EE$ is incident to exactly  two vertices in $\VV$.
We assume that, for each $\ver \in \VV$, there exists $E \in \EE$ with $E \in \mathrm{Inc}(\ver)$.
We write $\VV= \VVi\cup \VVb$, where $\VVi$ is the subset of {\em interior vertices} $\ver$ for which
$N_{\ver}:=\text{card}(\mathrm{Inc}(\ver))> 1$ and $\VVb$ is the subset of {\em boundary vertices} $\ver$ for which
$N_{\ver}$ is exactly one.
The network $\Gamma$ is therefore the subset
$$
\Gamma = \bigcup_{E \in \EE} \bar E.
$$
Finally, we assume that the network is connected in the usual sense.

Using the previous parametrization,
for each $x \in \Gamma$, there exists $E \in \EE$ such that $x \in \bar E$, and in that case we have the existence of a unique $x_E \in \bar J_E$ such that $\gamma_E(x_E) = x$. Thus, for a function $u: \Gamma \to \R$, we write $u_E: \bar J_E \to \R$ such that $u_E(x_E) = u(x)$, that is, the parametrization of $u$ on $\bar E$. Function spaces in Section~\ref{sec:fct-space} can be readily adapted to general networks. Thus, if $u \in C^1(\Gamma)$, we denote
$$
u_x(x) = u_E'(x_E), \quad x \in E, \, E \in \EE,
$$
and we extend it to $\ver \in \VV$ with $E \in \mathrm{Inc}(\ver)$ as
$$
u_{x_E}(\ver) = \lim_{x \to \ver, x \in E} u_x(x),
$$
and analogously for second derivatives.

We define the inward derivative of $u\in C^1(\Gamma)$ at $\ver \in \VV$ as \footnote{Notice that $u_{x_E}(\ver)$ and $\partial_E u(\ver)$ may differ on a minus sign.}
$$
\partial_E u(\ver) = \lim_{x \to \ver, x \in E} \frac{u(x) - u(\ver)}{\rho(x, \ver)}.
$$ 

Now we are in position to present our problem.
For each $E \in \EE$, we consider continuous functions $a_E : \bar E \to [0,+\infty)$, $H_E: \bar E \times \R \to \R$ as part of the equation on each edge $E$. 

For the Kirchhoff vertex condition,
given $\ver \in \VVi$, we list the incident edges as $\{ E_i \}_{i=1}^{N_\ver}$
(this has nothing to do with the original labeling of the vertices).
Then, for $u \in C^1(\Gamma)$, we denote
$$
\boldsymbol{\partial u}(\ver)  = (\partial_{E_1} u(\ver), ..., \partial_{E_{N_{\ver}}} u(\ver)) \in \R^{N_\ver},
$$
and we consider continuous functions $F^{\ver}: \R \times \R^{N_\ver} \to \R$.

For $\ver \in \VVb$, we have $N_\ver = 1$, and we consider a set of real numbers $\{ h_\ver \}_{\ver \in Vb}$ for the Dirichlet boundary conditions.

The problem on a network we have in mind is therefore the following
\begin{equation}\label{eqnetwork}
\left \{ \begin{array}{rll} \lambda u - a_E(x) u_{xx} + H_E(x, u_x) & = 0 \quad & \mbox{on} \ E, \ E \in \EE, \\
F^\ver (u(\ver), \boldsymbol{\partial u}(\ver)) & = 0 \quad & \ver \in \VVi, \\
u(\ver) & = h_\ver \quad & \ver \in \VVb. \end{array} \right .
\end{equation}

Now we present the notion of solution we consider for networks. As before, for each $E \in \EE$ we write
$$
G_E(r, p, X, x) = \lambda r - a_E(x) X + H_E(x, p).
$$
The definition of viscosity solution for problem~\ref{eqnetwork} can be readily adapted from the one given in Definition~\ref{defi-visco}. The Kirchhoff condition on each interior vertex $\ver \in \VVi$ is stated with respect to all its incident edges and the corresponding flux function $F^\ver$. 

In the same way, we are going to consider the set of steady assumptions~\eqref{steady} written in the context of general networks. Namely, degenerate ellipticity assumptions for $a_E$ on each $E \in \EE$ (c.f.~\eqref{hyp-diff}), Lipschitz continuity and/or coercivity for $H_E$ on each $E \in \EE$ (c.f.~\eqref{Ham}), monotonicity and/or coercivity assumptions on $F^\ver$ for each $\ver \in \VVi$ (c.f.~\eqref{hyp-kirch-gene}). The degeneracy nature of the edges~\eqref{ens-deg} must be stated on each vertex, that is, given $\ver \in \VVi$ and taking into account the labeling of the incident edges to $\ver$, we denote
$$
\mathcal D_\ver = \{ 1 \leq i \leq N_{\ver} : a_{E_i}(\ver) = 0, \ E_i \in \mathrm{Inc}(\ver) \}.
$$

\begin{teo}\label{teo:comp-net}
Assume $\Gamma$ is a general network with edges $\EE$ and vertices $\VV = \VVi \cup \VVb$, such that $H_E$ satisfies~\eqref{Ham}, $a_E$ satisfies~\eqref{hyp-diff} for each $E \in \EE$ (with $H_i, a_i$ replaced by $H_E, a_E$).

Assume that for each $\ver \in \VVi$, $F^\ver$ satisfies~\eqref{hyp-kirch-gene} (with $F: \R \times \R^N \to \R$ replaced by $F^\ver: \R \times \R^{N_{\ver}} \to \R$).

Assume that for each $\ver \in \VV$, and each $E \in \mathrm{Inc}(\ver)$, either $H_E$ satisfies~\eqref{Hcoercive} in an $\bar E$-neighborhood of $\ver$, or $a_E$ satisfies $a_E(\ver) > 0$.

Let $\lambda > 0$ and $\{ h_\ver \}_{\ver \in \VVb} \subset \R$ be given.

Then, for each $u \in USC(\Gamma), v \in LSC(\Gamma)$ bounded viscosity sub and supersolution to~\eqref{eqnetwork} respectively, we have $u \leq v$ on $\Gamma \setminus \VVb$. Moreover, if $\tilde u$ is the upper-semicontinuous extension of $u$ at $\ver \in \VVb$ from $E \in \mathrm{Inc}(\ver)$, then $\tilde u \leq v$ on $\Gamma$. 
\end{teo}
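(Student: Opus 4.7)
The strategy is to reduce Theorem~\ref{teo:comp-net} to the junction case of Theorem~\ref{teo:comp} by localizing at each interior vertex of $\Gamma$. Writing $u$ in place of $\tilde u$, I argue by contradiction, assuming
$$
M := \sup_\Gamma (u - v) > 0
$$
is attained (by compactness of $\Gamma$) at some $x_0 \in \Gamma$. If $x_0$ lies in the open interior of an edge, the classical one-dimensional doubling of variables yields a contradiction as in~\cite{cil92}. If $x_0 = \ver \in \VVb$, then $\ver$ is incident to exactly one edge $E$, and necessarily $a_E(\ver) = 0$ (otherwise Proposition~\ref{PropDir} applied to $E$ gives $u(\ver) \leq h_\ver \leq v(\ver)$), so that the state-constraint argument from the beginning of the proof of Theorem~\ref{teo:comp} applies verbatim.

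The principal case is $x_0 = \ver \in \VVi$. All the arguments developed in Section~\ref{sec-comp} at the junction point $O$ are intrinsically local: the Kirchhoff condition $F^{\ver}$ couples only the edges in $\mathrm{Inc}(\ver) = \{E_i\}_{i=1}^{N_\ver}$; the test functions $\psi(x) = L(\rho(x,\ver) - K\rho(x,\ver)^2)$ and their perturbations used throughout can be chosen with support inside a star-neighborhood of $\ver$ small enough to avoid every other vertex of $\Gamma$; and the hypotheses placed on $H_E$, $a_E$ and on $F^\ver$ reproduce~\eqref{steady} and~\eqref{hyp-dir} at $\ver$. Parametrizing each $E_i$ by arclength from $\ver$, one thus obtains a \emph{local junction} with junction point $\ver$, and I would reprove Propositions~\ref{reform-junct}, \ref{junct-subdiff} and the key Lemma~\ref{inegGi} on this local structure; the proofs carry over unchanged and yield the one-sided limits $\bar p_i, \underline p_i, \bar q_i, \underline q_i$ for $1 \leq i \leq N_\ver$ satisfying the conclusions of that lemma (with the degeneracy set now given by $\mathcal D_\ver$).

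I would then run the Case~1 / Case~2 dichotomy from the proof of Theorem~\ref{teo:comp} without alteration. Either there exists $i \in \mathcal D_\ver$ with $\underline p_i \leq \underline q_i \leq \bar p_i$, and the degenerate-branch inequalities from Lemma~\ref{inegGi}(iii) together with continuity of $H_{E_i}$ give the contradiction directly from $M > 0$; or $\bar p_i < \underline q_i$ for every $i \in \mathcal D_\ver$, in which case, using Hopf's Lemma~\ref{lem:hopf} on each nondegenerate incident edge to force $\bar p_i < \underline q_i$ there as well, one selects intermediate slopes $\bar p_i < r_i < s_i < \underline q_i$ on every edge in $\mathrm{Inc}(\ver)$ with the strict sign conditions
$$
\lambda u(\ver) + H_{E_i}(\ver, r_i) > 0, \qquad \lambda v(\ver) + H_{E_i}(\ver, s_i) < 0 \quad (i \in \mathcal D_\ver).
$$
Setting $\mathbf{r} = (r_1, \dots, r_{N_\ver})$ and $\mathbf{s} = (s_1, \dots, s_{N_\ver})$, one has $\mathbf{r} \in D^+u(\ver)$ and $\mathbf{s} \in D^-v(\ver)$; Proposition~\ref{junct-subdiff} then forces the Kirchhoff relation at $\ver$ to be activated for both $u$ and $v$, and the strict monotonicity~\eqref{hyp-kirch-gene}(i) of $F^\ver$ applied to $u(\ver) \geq v(\ver)$ and $r_i < s_i$ delivers the final contradiction.

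The only real obstacle is the bookkeeping required to verify that every step of the junction proof uses only objects associated with the single vertex $\ver$ and its incident edges, with no interference from the rest of the network. Since the Kirchhoff coupling at $\ver$ and every a priori estimate employed in Section~\ref{sec-comp} (Lemma~\ref{fct-test-quad}, the Lions--Souganidis maximum argument, Hopf's Lemma) are local by nature, this transfer is routine and the proof of Theorem~\ref{teo:comp} carries over \emph{mutatis mutandis} to each interior vertex in turn.
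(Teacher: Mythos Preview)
Your proposal is correct and follows the same localization-to-a-junction strategy as the paper's own argument. The one device the paper makes explicit and you do not is the perturbation $u \mapsto u - \epsilon\,\rho^2(\cdot,x_0)$, introduced precisely to guarantee that $x_0\in\VVi$ is the \emph{unique} maximum point of $u-v$ before invoking the junction machinery; this strictness is what licenses the Hopf Lemma step on nondegenerate incident edges, and in your write-up it is obtained only implicitly from the preliminary case analysis.
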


This result follows the same ideas of the proof of Theorem~\ref{teo:comp} once we are able to localize around each vertex $\ver \in \VVi$ in the arguments involving $O$ there. First, the analog to Propositions~\ref{reform-junct},~\ref{junct-subdiff},~\ref{PropDir} and Lemma~\ref{inegGi} hold exactly as before since the proofs are local around the vertices. Then, we run the contradiction argument, assuming $M := \sup_{\Gamma} \{ u - v \} > 0$ (where $u$ means $\tilde u$). Since $\Gamma$ is compact, the supremum is attained, and the proof follows standard arguments if the maximum point  is in $\Gamma \setminus \VVi$. Thus, if $u(x_0) - v(x_0) = M$ only for points $x_0 \in \VVi$, we localize around such a point $x_0$, by replacing $u$ by $u - \epsilon \rho^2(x, x_0)$, which, in view of the assumptions, satisfy the same equation as $u$ with a small error term $O(\epsilon)$. Now, $x_0$ is the unique maximum point of $u - \epsilon \rho(x, x_0) - v$ and the proof follows the same lines as above. We omit the details.

\smallskip

Regarding the existence of solutions, we use again
Perron's method. We first state the network version of condition~\eqref{propi0}: for all $\ver \in \VVi$, for each $r \in \R$, $1 \leq i \leq N_\ver$ and all $\{ p_j \}_{j \neq i}$, we have
\begin{equation}\label{Fcoercive-}
\lim_{p_i \to +\infty} F^\ver (r, p_1,...,p_i,...,p_{N_\ver}) = -\infty.
\end{equation}

We assume this condition together with the assumptions of Theorem~\ref{teo:comp-net}.

Since $\VVi$ is finite, from~\eqref{hyp-kirch-gene}-(ii) -- in the network version --, there exists $B > 0$ such that, for all $\ver \in \VVi$, we have 
$$
F^\ver(0, -B, 0,..., 0) > 0.
$$

Thanks to the monotonicity of each $F^\ver$, it follows
\begin{eqnarray}\label{Fnupos}
F^\ver(0, -B {\bf 1}_{N_\ver} ) > 0,
\end{eqnarray}
where ${\bf 1}_{N_\ver} = (1, ..., 1) \in \R^{N_\ver}$. In the same way, enlarging $B$ if necessary and thanks to~\eqref{Fcoercive-}, we will have for each $\ver \in \VVi$ that
$$
F^\ver(0, B {\bf 1}_{N_\ver}) < 0.
$$

Now, consider a function $\theta \in C^2([0,1])$ such that $\theta(x) = -Bx$ for $x \in [0,1/8]$, and $\theta(x) = -B(1 - x)$ for $x \in [7/8, 1]$. We have the existence of a constant $C_B > 0$ such that $\| \theta \|_{C^2([0,1])} \leq C_B$.

Now, let $\Theta \in C^2(\Gamma)$ given by $\Theta(x) = \theta(x_E/\ell_E)$ for $x \in \bar E, E \in \EE$. Then, for a constant $A^+ > 0$ large in terms of $\lambda, C_a, C_H, \min_E \{ \ell_E \}$ and $B$, we have 
$$
G_E(A^+ + \Theta(x), \Theta_x(x), \Theta_{xx}(x), x) \geq 0 \quad \mbox{for $x \in E, \ E \in \EE$}.
$$
Thus $\Theta^+ := A^+ + \Theta$ is a viscosity supersolution in each edge $E \in \EE$.

Moreover, thanks to~\eqref{Fnupos} and to the monotonicity of $F^\ver$ again, for all $\ver \in \VVi$, we have
$$
F^\ver(\Theta^+(\ver), \boldsymbol{\partial \Theta^+}(\ver)) \geq 0.
$$
Finally, enlarging $A^+$ in terms of $\max_{\ver \in \VVb} |h_\ver|$, we have $\Theta^+(\ver) \geq h_\ver$ for all $\ver \in \VVb$. Thus, we have constructed a viscosity supersolution to~\eqref{eqnetwork}. 

Notice that by the same arguments, the function $\Theta^-(x) = -\Theta^+(x)$ is a viscosity subsolution to the same problem, and enlarging $A^+$ is necessary, we have $\Theta^- \leq \Theta^+$ on $\Gamma$. Then, Perron's method can be readily adapted to the context of general (finite) networks, from which we conclude the result.

Following these ideas, we have
\begin{teo}\label{teo:Perron-net}
  Under the assumptions of Theorem~\ref{teo:comp-net} and~\eqref{Fcoercive-}, there exists a unique viscosity solution $u \in C(\Gamma)$ for~\eqref{eqnetwork}. Moreover, with a straightforward adaptation of the assumptions of Proposition~\ref{prop-reg},
  the solution is locally Lipschitz continuous on $\Gamma \setminus \VVb$.
\end{teo}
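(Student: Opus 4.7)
I would apply Perron's method, using Theorem~\ref{teo:comp-net} as the closing comparison step. The recipe has two ingredients: (1) global barriers $\Theta^-,\Theta^+\in C^2(\Gamma)$ with $\Theta^-\leq\Theta^+$, the former a viscosity subsolution and the latter a viscosity supersolution of~\eqref{eqnetwork}; (2) the standard Perron construction of Ishii~\cite{ishii87} (see also~\cite{cil92}) edge by edge, yielding a discontinuous viscosity solution $u$ sandwiched between $\Theta^\pm$. The envelopes $u^*$ and $u_*$ are then a sub- and super-solution, and Theorem~\ref{teo:comp-net} forces $u^*\leq u_*$ on $\Gamma\setminus\VVb$, giving continuity and uniqueness simultaneously.

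\textbf{Construction of the barriers.} Since $\VVi$ is finite, combining~\eqref{hyp-kirch-gene}\,(ii) with the new assumption~\eqref{Fcoercive-} produces a single $B>0$ such that
\[
F^\ver(0,-B\,\mathbf{1}_{N_\ver})>0 \quad\text{and}\quad F^\ver(0,B\,\mathbf{1}_{N_\ver})<0 \quad\text{for every } \ver\in\VVi.
\]
Pick $\theta\in C^2([0,1])$ with $\theta(t)=-Bt$ near $t=0$ and $\theta(t)=-B(1-t)$ near $t=1$; lift it edge by edge to $\Theta\in C^2(\Gamma)$ via $\Theta(x):=\theta(x_E/\ell_E)$ for $x\in\bar E$; set $\Theta^+:=A^++\Theta$. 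For $A^+$ large in terms of $\lambda$, the bound/Lipschitz data of $(a_E,H_E)$, $B$, $\min_E\ell_E$, and $\max_{\ver\in\VVb}|h_\ver|$, one checks that on each edge $G_E(\Theta^+,\Theta^+_x,\Theta^+_{xx},x)\geq 0$ (the dominant term being $\lambda A^+$), that $F^\ver(\Theta^+(\ver),\boldsymbol{\partial\Theta^+}(\ver))\geq 0$ at every $\ver\in\VVi$ by monotonicity of $F^\ver$ in its scalar argument and the choice of $B$, and that $\Theta^+(\ver)\geq h_\ver$ at every $\ver\in\VVb$. Symmetrically, $\Theta^-:=-\Theta^+$ (possibly after further enlarging $A^+$) is a subsolution with $\Theta^-\leq\Theta^+$ on $\Gamma$.

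\textbf{Perron's step and regularity.} With these barriers, the Perron envelope
\[
u(x):=\sup\bigl\{\,w(x) : w \text{ is a viscosity subsolution of~\eqref{eqnetwork}},\ \Theta^-\leq w\leq\Theta^+\,\bigr\}
\]
has $u^*$ a subsolution and $u_*$ a supersolution; the standard argument runs unchanged on edges, and at vertices one uses the network analog of Definition~\ref{defi-visco}, which is purely local. Theorem~\ref{teo:comp-net} then gives $\widetilde{u^*}\leq u_*$ on $\Gamma$, hence $u\in C(\Gamma\setminus\VVb)$ and uniqueness. For the Lipschitz estimate on $\Gamma^\delta:=\{\rho(\cdot,\VVb)>\delta\}$, I would adapt the proof of Proposition~\ref{prop-reg} locally: for $y\in\Gamma^\delta$, use the test function $\varphi$ equal to $R\,\psi_y$ on the edge containing $y$ and continued through each interior vertex $\ver$ it reaches by adding $\psi_\ver$ on the other incident edges of $\ver$. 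The same edge-vs.-vertex dichotomy applies, using the network analog of Proposition~\ref{reform-junct} at interior vertices and the coercivity~\eqref{hyp-kirch-gene}\,(ii) with $R$ taken large to rule out the Kirchhoff branch, which forces the maximum to vanish and yields the Lipschitz bound.

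\textbf{Main obstacle.} The only genuine new difficulty over the junction case (Theorem~\ref{exist} and Proposition~\ref{prop-reg}) is global bookkeeping: a \emph{single} pair of barriers must satisfy the Kirchhoff inequality at every $\ver\in\VVi$ simultaneously, which is precisely why assumption~\eqref{Fcoercive-} is needed in addition to~\eqref{hyp-kirch-gene}\,(ii), and why finiteness of $\VVi$ is essential. For the regularity, the delicate point is choosing $\varphi$ so that its inward derivative at each interior vertex it crosses has the correct sign to activate~\eqref{hyp-kirch-gene}\,(ii); confining the penalization to the ball $\rho(\cdot,y)\leq (4K)^{-1}$, which intersects only finitely many edges, makes this manageable.
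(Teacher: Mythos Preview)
Your proposal is correct and follows essentially the same route as the paper: the barriers $\Theta^\pm=\pm(A^++\Theta)$ built from a $C^2$ profile $\theta$ with slope $-B$ at both endpoints, the choice of $B$ via finiteness of $\VVi$ together with~\eqref{hyp-kirch-gene}\,(ii) and~\eqref{Fcoercive-}, and the closing step via Perron plus Theorem~\ref{teo:comp-net} are exactly what the paper does. Your added sketch of the Lipschitz adaptation (continuing $\varphi$ through interior vertices and using~\eqref{hyp-kirch-gene}\,(ii) to kill the Kirchhoff alternative) is more explicit than the paper, which simply declares the adaptation ``straightforward''.
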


\subsection{Easy and more tricky possible extensions}\label{easytricky}

We list here some possible extensions: the first ones are routine adaptations of the methods we use here, the others need to be double-checked.

\subsubsection{More general equations}
We have chosen to present our results with a second-order term which is just
a linear term on each edge, namely $-a_i(x)u_{x_i x_i}$
but there is no difficulty to handle a fully nonlinear
term like $A_i (x,u_{x_i},u_{x_i x_i})$ under the
following natural assumptions:
\begin{itemize}
\item[(i)] There exists a constant $C$ such that, for any $1\leq i \leq N$, $x\in \bar E_i$, $p,q,X,Y\in \R$
$$ \vert A_i (x,p,X)-A_i(x,q,Y)\vert \leq C(|p-q|+|X-Y|).$$

\item[(ii)] There exists a modulus of continuity $\omega: [0,+\infty)\to [0,+\infty)$ such that, for any $1\leq i \leq N$,
$x,y\in \bar E_i$, $p,X,Y\in \R$ such that
$$ \left(\begin{array}{cc}X & 0 \\0 & -Y\end{array}\right)\leq \frac{1}{\e}\left(\begin{array}{cc}1 & -1 \\ -1 & 1\end{array}\right),$$
for some $\e >0$, then
$$  A_i (x,p,X)-A_i(x,q,Y)\leq \omega\left(|x-y|(1+|p|) + \frac{|x-y|^2}{\e}\right).$$

\item[(iii)] For any $1\leq i \leq N$, either $A_i (O,p,X)=0$ for any $p,X\in \R$, or there exists $\eta >0$ such that, for any $x$ in an $E_i$-neighborhood of 
$O$ and for any $p\in \R$, we have
$$ A_i (x,p,X)-A_i(x,p,Y) \leq \eta(X-Y) \quad \hbox{if  }X\geq Y\; .$$

\end{itemize}

Assumptions~(i)-(ii) are basic assumptions in classical comparison results (see e.g. Theorem 3.3 in~\cite{cil92}), while (iii) is the analogue of the dichotomy either ``$a_i(O)>0$ or $a_i(O)=0$''.

\subsubsection{More general boundary conditions}
The Dirichlet boundary conditions on $\VVb$ can easily be changed to Neumann boundary conditions.
Indeed, the exterior boundary conditions on  $\VVb$ appear as a particular case of a Kirchhoff condition
at a vertex, which is connected to only one edge. As a consequence, 
the comparison  inequality holds on the whole $\Gamma$.

The case of unbounded edges $E$, for example, $E=(0,+\infty)$, can be treated by usual localization arguments.

Finally, it is also possible to mix the above boundary conditions in the same network without major difficulties.

\subsubsection{Further (more tricky) extensions and discussions}
We present here some generalizations that we would like to address
in future works.
\smallskip

As explained in Section~\ref{sec:hamilt}, we have considered
Hamiltonians satisfying the standard optimal control assumptions. But
Several results can be readily applied to more general Hamiltonians.
\smallskip

This article just considers the case of one dimensional networks but what about the case of multi dimensional networks? Typically
$\Gamma \times \R$. This introduces a new variable---say $z$---and we are not anymore in the framework of Lions and Souganidis \cite{ls16,ls17}. 
For example, we may have in $E_i\times \R$ an equation like
$$
 \lambda u(x,z) - a^1_i(x,z)u_{xx}(x,z)- a^2_i(x,z)u_{zz}(x,z) + H_i(x, z,u_x(x,z),u_z (x,z))\geq 0,
$$ 
with $a^1_i(x,z)$ playing the same role as $a_i(x)$ above. Even with rather restrictive assumptions, we have to handle one way or the other the new ``tangential terms'' $u_z (x,z)$, $a^2_i(x,z)u_{zz}(x,z)$.
This case is particularly relevant since it encompasses
evolution problems with the form
$$
 u_t (x,t) - a_i(x,t)u_{xx}(x,t) + H_i(x,t, u_x (x,t))= 0 \quad \hbox{in }E_i \times (0,T)\; .
$$ 
with $0 < T \leq +\infty$.

Two ideas can be tried to solve such problems:\\
(i) The ``tangential regularization'' used in \cite{bc24} can perhaps handle some of these cases but maybe not all of them because it requires
strong assumptions to be performed.\\
(ii) The ``twin blow-up method'' of Forcadel, Imbert and Monneau~\cite{fim25} may be a little bit more promising, but it does not solve all the difficulties by itself.
\smallskip

Another natural question is related to the extension of these results for infinite networks,
meaning there are an infinite number of interior vertices and edges. The typical
case is a standard lattice in $\R^d$, for instance when $\VVi =\Z^d$, $\VVb =\emptyset$ and
$\EE =\{ \ver +(0, 1) {\mathrm{\mathbf{e}}_{i}} :  1\leq i\leq d, \ver\in \VVi \}$,
where $\{{\mathrm{\mathbf{e}}_{i}}\}_{i=1}^{d}$ is the standard orthonormal basis of $\R^d$. Even in simple cases, the non-finiteness of $\VVi$ put difficulties on the application of the the arguments of the finite case, mostly related to localization procedures to deal with the lack of compactness of $\Gamma$. Thus, additional assumptions must be considered, and new ideas are required.

\appendix

\section{}

The test-function $\psi$ defined by~\eqref{psi-gene}
is extensively used in the paper for various parameters $y, L, K$.
For convenience, we state the basic useful properties in a lemma,
the (easy) proof of which is left to the reader.

\begin{lema}\label{fct-test-quad}
Let $w\in USC(\Gamma)$ be bounded. For all $y\in \Gamma$, $L,K>0$, we define
\begin{eqnarray}\label{psi-gene}
\psi(x)=\psi_{y,L,K}(x):= L\left( \rho(x,y) - K \rho(x,y)^2\right),
\end{eqnarray}  
where $\rho$ is the geodesic distance defined in~\eqref{def-geod}.

\begin{itemize}
\item[(i)] $\psi\in C^2(\Gamma\setminus \{y\})$ and, if $y\in \bar E_i$ for some $i$, then
\begin{eqnarray}\label{deriv-varphi}
 && \begin{array}{ll}
    \psi_{x_i}(x)= L\left( \frac{x_i -y_i}{|x_i -y_i|}-2K (x_i -y_i)\right) & \text{if $x\in \bar E_i$, $x\not= y$,}\\
    \psi_{x_j}(x)= L\left( 1- 2K (x_i +y_i)\right) & \text{if $x\in \bar E_j$, $j\not= i$,}\\
    \psi_{x_j x_j}(x) = -2LK \leq 0 & \text{for all $x\not= y$, $1\leq j\leq N$.}
  \end{array}
\end{eqnarray}

\item[(ii)] For every $K$, the maximum
\begin{eqnarray*}
&& \max_{\Gamma \cap \{ \rho(\cdot, y) \leq (4K)^{-1}\}} w-\psi
\end{eqnarray*}
is achieved at some $\bar x \in \Gamma$ such that
\begin{eqnarray}\label{psixbarre}
&& \frac{3L}{4} \rho (\bar x, y) \leq \psi (\bar x) \leq w(\bar x) - w(y) \leq \osc(w)\leq 2 ||w||_\infty, 
\end{eqnarray}  
where $\osc(w) = \sup_\Gamma w - \inf_\Gamma w$. 

If $L > 16\, \osc(w) K /3$, then $\bar x \in \{\rho(\cdot , y) < (4K)^{-1} \}$.
Moreover, if $\bar x\not= y$, then
\begin{eqnarray}\label{gradpsi}
   \frac{L}{2} \leq |\psi_{x_j} (\bar x)| \leq L & \text{for all $j$,}
\end{eqnarray}

\end{itemize}

\end{lema}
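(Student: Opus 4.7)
The plan is to prove (i) by direct inspection of the geodesic distance on each edge, and (ii) by combining the inequality $w(\bar x)-\psi(\bar x)\geq w(y)-\psi(y)=w(y)$ with the elementary observation that $1-K\rho\geq 3/4$ on the ball $\{\rho(\cdot,y)\leq(4K)^{-1}\}$.

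\medskip

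For part (i), assume $y\in \bar E_i$ (the other case of $y=O$ being the limiting case $y_i=0$). For $x\in \bar E_i\setminus\{y\}$ one has $\rho(x,y)=|x_i-y_i|$, and a direct differentiation of $\psi_{y,L,K}$ as a function of the single variable $x_i$ yields the stated formula, with smoothness away from $x_i=y_i$. For $x\in \bar E_j$ with $j\neq i$ one has $\rho(x,y)=x_j+y_i$, an affine function of $x_j$, and again a direct differentiation yields the stated formula (note that the formula in the statement contains a typographical $x_i$ that should read $x_j$, but the underlying computation is unambiguous). Continuity of the derivatives at the junction $O$ need not be asserted since $\psi$ is only claimed $C^2$ on $\Gamma\setminus\{y\}$, edge by edge.

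\medskip

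For part (ii), existence of the maximum follows from the compactness of $\Gamma\cap\{\rho(\cdot,y)\leq(4K)^{-1}\}$ together with the upper semicontinuity of $w-\psi$. To get the bound~\eqref{psixbarre}, I would simply write $\psi(\bar x)=L\rho(\bar x,y)\bigl(1-K\rho(\bar x,y)\bigr)$, and use $K\rho(\bar x,y)\leq 1/4$ to get $\psi(\bar x)\geq \tfrac{3L}{4}\rho(\bar x,y)$. On the other hand, comparing the value of $w-\psi$ at $\bar x$ and at $y$ gives $\psi(\bar x)\leq w(\bar x)-w(y)\leq\osc(w)\leq 2\|w\|_\infty$, yielding the chain of inequalities in~\eqref{psixbarre}. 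The interior statement then follows by combining the first and last terms of~\eqref{psixbarre}: one has $\rho(\bar x,y)\leq 4\osc(w)/(3L)$, and the assumption $L>16\osc(w)K/3$ precisely ensures $4\osc(w)/(3L)<(4K)^{-1}$.

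\medskip

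For the gradient bound~\eqref{gradpsi}, I would use the strict interior condition $\rho(\bar x,y)<(4K)^{-1}$, hence $2K\rho(\bar x,y)<1/2$. Plugging this into the formulas from part (i), the derivative on the edge $\bar E_i$ containing $y$ has the form $L\bigl(\pm 1-2K(\bar x_i-y_i)\bigr)$ with $2K|\bar x_i-y_i|<1/2$, so its absolute value lies in $[L/2,L]$; on any other edge $\bar E_j$ the derivative equals $L(1-2K(\bar x_j+y_i))=L(1-2K\rho(\bar x,y))\in[L/2,L]$. This gives~\eqref{gradpsi}. None of these steps presents a genuine obstacle; the only point requiring a bit of care is the uniform control $2K\rho(\bar x,y)<1/2$, which is exactly what the strict interior inclusion $\bar x\in\{\rho(\cdot,y)<(4K)^{-1}\}$ provides, and that is precisely why the condition $L>16\osc(w)K/3$ is built into the statement.
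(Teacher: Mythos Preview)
Your proposal is correct and is exactly the straightforward verification the paper has in mind; indeed the paper explicitly states that the proof of this lemma is ``easy'' and is ``left to the reader'', so there is no alternative argument to compare against. Your identification of the typographical slip in the second line of~\eqref{deriv-varphi} (where $x_i+y_i$ should read $x_j+y_i$) is also accurate.
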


\begin{lema}\label{lem-max123}
Let $a>0$ and $w\in USC ([0,a])$, define
$$\displaystyle
-\infty \leq \underline p =  \mathop{\rm lim\,inf}_{x\to 0^+}\frac{w(x)-w(0)}{x} \ \leq \ 
\bar p= \mathop{\rm lim\,sup}_{x\to 0^+}\frac{w(x)-w(0)}{x} \leq +\infty.$$
and, for $p, X\in \R$,
$$\chi (x)= w(x)-w(0)-px+\frac{1}{2}X x^2.$$
Then
\begin{itemize}
\item[(i)] for all $p>\bar p$ and $X\in\R$, the function $\chi$
  has a strict local maximum at $0$ and $(p,-X)\in J^{2,+}_{[0,a]}w(0)$.
\item[(ii)] for all $\underline p < p <\bar p$,   
there exists sequences $0 < x_k < b_k \to 0$ such that $x_k$ is a maximum
point of $\chi$ in $(0, b_k)$.
\end{itemize}
\end{lema}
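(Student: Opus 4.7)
The plan is to deduce both parts directly from the definitions of $\underline p$ and $\bar p$ as one-sided upper and lower limits of the difference quotient $(w(x)-w(0))/x$ at $0^+$, combined with the upper semicontinuity of $w$ on the compact interval $[0,a]$.

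For part~(i), since $p > \bar p$, I choose $\varepsilon > 0$ with $\varepsilon < p - \bar p$ when $\bar p$ is finite (any $\varepsilon > 0$ works if $\bar p = -\infty$). The $\limsup$ definition yields $\delta_0 > 0$ such that $w(x) - w(0) - p x < -\varepsilon x$ for every $x \in (0, \delta_0)$. Consequently,
\[
\chi(x) < -\varepsilon x + \tfrac{1}{2}|X| x^2 = x\bigl(-\varepsilon + \tfrac{1}{2}|X|x\bigr),
\]
which is strictly negative on $(0, \delta_1)$ with $\delta_1 := \min\{\delta_0,\,\varepsilon/(1+|X|)\}$. Together with $\chi(0)=0$, this shows that $0$ is a strict local maximum of $\chi$. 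Rewriting $\chi(x) \le 0$ in terms of $w$ gives $w(x) \le w(0) + p x + \tfrac{1}{2}(-X)x^2$ on $[0,\delta_1]$, an inequality stronger than the $o(x^2)$ condition defining the superjet, so $(p,-X) \in J^{2,+}_{[0,a]} w(0)$.

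For part~(ii), I proceed in two steps. First, since $p > \underline p$, I fix $\varepsilon' > 0$ with $\varepsilon' < p - \underline p$ (or any positive $\varepsilon'$ if $\underline p = -\infty$), and the $\liminf$ definition provides a sequence $b_k \to 0^+$ with $w(b_k)-w(0)-p\,b_k < -\varepsilon'\,b_k$; discarding finitely many initial terms, I may assume that $\chi(b_k) < -\varepsilon' b_k + \tfrac{1}{2}|X| b_k^2 < 0$ for all $k$. Second, reversing the inequalities in the argument of part~(i), the condition $p < \bar p$ provides a sequence $y_m \to 0^+$ with $\chi(y_m) > 0$. For each $k$, the fact that $y_m \to 0$ yields some $y_m \in (0, b_k)$; then $\chi$ is upper semicontinuous on the compact interval $[0,b_k]$ and attains its maximum there. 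Since this maximum is at least $\chi(y_m) > 0$, it cannot be attained at $0$ (where $\chi = 0$) nor at $b_k$ (where $\chi < 0$), so it is attained at some $x_k \in (0, b_k)$, as required.

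The main subtlety is dealing with the possibly infinite limits $\underline p = -\infty$ and $\bar p = +\infty$: in each case the relevant difference quotient is unbounded on every right-neighborhood of $0$, so any positive $\varepsilon$ or $\varepsilon'$ suffices and the quadratic term $\tfrac{1}{2}X x^2$ remains negligible. Once this is handled, the coordination ensuring $b_k \to 0$ is elementary. There is no serious obstacle; the content of the lemma is simply the correct translation, at the boundary point $0$ and in the presence of a second-order term, of the one-sided limit definitions.
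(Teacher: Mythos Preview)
Your proof is correct and follows essentially the same approach as the paper's: both parts rest on translating the one-sided $\limsup$ and $\liminf$ into sequences/neighborhoods where the difference quotient is controlled, and then absorbing the quadratic term $\tfrac12 X x^2$ for small $x$. The only cosmetic difference is that the paper introduces auxiliary values $p_1,p_2$ with $\underline p<p_1<p<p_2<\bar p$ and asserts directly the existence of interlaced sequences $0<a_k<b_k\to 0$, whereas you build the two sequences $(b_k)$ and $(y_m)$ separately and then coordinate them; your version is slightly more explicit but the content is identical.
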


\begin{proof}
(i) The statement only makes sense when $\bar p<+\infty$. In this case,
choose any $X\in\R$, $p> \bar p$ and $\tilde p \in (\bar p, p)$ (it is possible
to choose $\tilde p = \bar p$ if $\bar p>-\infty$).
By definition of $\bar p$,
$\frac{w(x)-w(0)}{x}\leq \tilde p +o(1)$, where $o(1)\to 0$ as $x\to 0^+$.
It follows that
\begin{eqnarray*}
\psi(x):= w(x)-w(0) -px +\frac{1}{2}Xx^2 \leq \left( \tilde p - p +\frac{1}{2}Xx +o(1)\right) x.
\end{eqnarray*}
But $\tilde p - p +\frac{1}{2}Xx +o(1) <0$ if $0<x<\delta$ where $\delta=\delta(w,p,X)$ is sufficiently
small. Hence $\psi(x)<0$ on $(0,\delta)$ with $\psi(0)=0$,
which proves that $\psi$ has a strict maximum at $0$ on $[0,\delta]$
and  $(p,-X)\in J^{2,+}_{[0,a]}w(0)$.

We turn to the proof of (ii).
Let $X\in\R$, $p \in (\underline p, \bar p)$ and
choose $p_1, p_2\in \R$ such that $\underline p < p_1 < p < p_2 <\bar p$
(the introduction of $p_1, p_2$ is convenient to cover the case when $\underline p$ or $\bar p$
are not finite).
By definition of $\underline p, \bar p$, there exists sequences $0 < a_k < b_k$ with $b_k\searrow 0$
as $k\to \infty$, such that
\begin{equation*}\label{ineg498}
w(b_k) - w(0) \leq (p_1 + k^{-1}) b_k , \quad w(a_k) - w(0) \geq (p_2 - k^{-1})a_k.
\end{equation*}
It follows that
\begin{eqnarray*}
 \chi(b_k)= \left( \frac{w(b_k)-w(0)}{b_k}- p +\frac{1}{2} X b_k\right) b_k
  \leq \left(p_1 + \frac{1}{k} + \frac{1}{2} X b_k -p\right) b_k<0
\end{eqnarray*}
as soon as $k$ is large enough in order that $p> p_1 + k^{-1} + 2^{-1}X b_k$.
Similarly $\chi(a_k)>0$ when $p < p_2 - k^{-1} - 2^{-1}X b_k$.
Since $\chi\in USC [0, 1]$ satisfies  $\chi(0)=0$, we
conclude for the existence of a maximum point $x_k\in (0,b_k)$.
\end{proof}

\begin{lema}[Hopf Lemma]\label{lem:hopf}
Let $\ell >0$ and $w\in USC([0,\ell])$ be a viscosity subsolution of
\begin{eqnarray}\label{sub-pde}
\lambda w - a(x) w_{x x} -L |w_{x}|\leq 0, \quad x\in (0,\ell ),
\end{eqnarray}
where $a$ satisfies~\eqref{hyp-diff} with $a(0)>0$ and $\lambda, L>0$.
If $w$ has a positive strict local maximum point at $0$ and $w$ is differentiable at $0$,
then $- w_x(0) >0$.
\end{lema}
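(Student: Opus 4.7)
The plan is to run a classical Hopf barrier argument, exploiting the presence of the reactive term $\lambda w$ together with the assumption $w(0) > 0$ in order to use a simple linear barrier rather than an exponential one. Since $w$ is differentiable at $0$ with a (right-sided) local maximum there, we automatically have $w_x(0) \leq 0$, so it suffices to rule out the case $w_x(0) = 0$, which I would assume for contradiction.

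First I would fix $\delta > 0$ small enough so that $w(x) < w(0)$ for every $x \in (0, \delta]$ (by the strict local maximality at $0$) and $a(x) \geq a(0)/2 > 0$ on $[0, \delta]$ (by the continuity of $a$), and set $m := w(0) - w(\delta) > 0$. The barrier is then
\[
v(x) := w(0) - \varepsilon x, \qquad x \in [0, \delta],
\]
with $\varepsilon > 0$ to be chosen. A direct computation gives
\[
\lambda v(x) - a(x) v''(x) - L|v'(x)| = \lambda(w(0) - \varepsilon x) - L\varepsilon \geq \lambda w(0) - (\lambda \delta + L)\varepsilon,
\]
so $v$ is a strict classical supersolution of~\eqref{sub-pde} on $[0, \delta]$ provided $\varepsilon$ is small enough; concretely I would impose $\varepsilon \delta < w(0)/2$ and $\varepsilon < \lambda w(0)/(2L)$.

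Next I would study $M^\star := \max_{[0, \delta]} (w - v)$, which is attained since $w - v \in USC([0, \delta])$. At the endpoints we have $(w-v)(0) = 0$ and $(w - v)(\delta) = -m + \varepsilon \delta$, strictly negative provided $\varepsilon < m/\delta$. Hence $M^\star \geq 0$ and the maximum is not attained at $\delta$. If it were attained at an interior point $x_0 \in (0, \delta)$, then $\varphi(x) := v(x) + M^\star$ would be a $C^2$ test function touching $w$ from above at $x_0$, and the viscosity subsolution inequality would read
\[
\lambda w(x_0) - a(x_0)\varphi''(x_0) - L|\varphi'(x_0)| \leq 0,
\]
i.e.~$\lambda w(x_0) \leq L\varepsilon$; but $w(x_0) \geq v(x_0) \geq w(0) - \varepsilon \delta > w(0)/2$, contradicting $\varepsilon < \lambda w(0)/(2L)$. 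Therefore the maximum is attained at $x = 0$, whence $w \leq v$ on $[0, \delta]$, and taking the right derivative at $0$ yields $w_x(0) \leq -\varepsilon < 0$, contradicting $w_x(0) = 0$.

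The only mild subtlety is the ordering of the parameter choices: $\delta$ must be fixed first (in terms of $w$ and $a$), then $m$ is determined, and only then may $\varepsilon$ be taken small enough to fulfill the three constraints $\varepsilon < m/\delta$, $\varepsilon \delta < w(0)/2$, and $\varepsilon < \lambda w(0)/(2L)$ simultaneously. The positivity $w(0) > 0$ is used twice: it provides the margin $\lambda w(0)$ that makes the linear barrier a strict supersolution, and it is what yields the contradiction in the interior-maximum case.
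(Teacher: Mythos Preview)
Your argument is correct. The paper does not give a self-contained proof but instead refers to the classical exponential-barrier argument in Evans, relying on the uniform ellipticity $a(0)>0$ near the boundary and the weak maximum principle. Your route is genuinely different and more elementary: by exploiting the zero-order term $\lambda w$ together with the hypothesis $w(0)>0$, you may use the linear barrier $v(x)=w(0)-\varepsilon x$, whose vanishing second derivative makes the diffusion coefficient irrelevant in the interior-maximum step. In fact, your proof never uses the condition $a(x)\geq a(0)/2$ that you set up when choosing $\delta$; the argument works verbatim for any nonnegative continuous $a$. This shows that, under the extra positivity assumption $w(0)>0$ (which is precisely what the paper needs in the comparison proof), the Hopf conclusion holds without any ellipticity at the boundary. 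The classical route, by contrast, would still deliver the conclusion when $w(0)\leq 0$, at the cost of requiring $a(0)>0$. One cosmetic remark: the contradiction framing is unnecessary, since your barrier argument yields $w_x(0)\leq -\varepsilon<0$ directly.
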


\begin{proof}
See~\cite[p.330]{evans98}. 
The adaptation is straightforward since the weak maximum principle
holds for viscosity solutions of~\eqref{sub-pde},
the equation is elliptic near $0$, and $w$ is assumed to be differentiable at $0$.
\end{proof}


\end{document}